\def\classification#1{\def\@class{#1}}
\DeclareFontFamily{OT1}{rsfs}{}
\DeclareFontShape{OT1}{rsfs}{n}{it}{<-> rsfs10}{}
\DeclareMathAlphabet{\mathscr}{OT1}{rsfs}{n}{it}
\newcommand{\R}{{\mathbb R}}
\newcommand{\Pro}{{\mathbb P}}
\newcommand{\C}{\mathbb{C}}
\newcommand{\F}{\mathbb{F}}
\newcommand{\Ge}{\mathcal{G}}
\newcommand{\Ka}{\mathcal{K}}
\newcommand{\ee}{\boldsymbol e}
\newcommand{\om}{\boldsymbol \omega}
\newcommand{\qu}{\boldsymbol q}
\newcommand{\ve}{\boldsymbol v}
\newcommand{\ur}{\boldsymbol u}
\newtheorem{theorem}{Theorem}
\newtheorem{lemma}[theorem]{Lemma}
\newtheorem{corollary}[theorem]{Corollary}
\theoremstyle{remark}
\newtheorem{remark}[theorem]{Remark}
\title{On the number of incidences between points and planes in three dimensions}
\author{Misha Rudnev}
\address{Misha Rudnev, Department of Mathematics, University of Bristol,
  Bristol BS8 1TW, United Kingdom}
\email{m.rudnev@bristol.ac.uk}
\subjclass[2000]{68R05,11B75}
\begin{document}
\begin{abstract} We prove an incidence theorem for points and planes in the projective space $\Pro^3$ over any field $\mathbb F$, whose characteristic $p\neq 2.$ An incidence is viewed as an intersection along a line of a pair of  two-planes from two canonical rulings of the Klein quadric. The Klein quadric can be traversed by a generic hyperplane, yielding a line-line incidence problem in a three-quadric,  the Klein image of a regular line complex. This hyperplane can be chosen so that  at most two lines meet. Hence, one can apply an algebraic  theorem of Guth and Katz, with a constraint involving $p$ if $p>0$.

This yields a bound on the number of incidences between $m$ points and $n$ planes in $\Pro^3$, with $m\geq n$ as
$$O\left(m\sqrt{n}+ m k\right),$$
where $k$ is the maximum number of collinear  planes, provided  that $n=O(p^2)$ if $p>0$. Examples  show that  this bound cannot be improved without additional assumptions.

This gives  one a vehicle to establish geometric incidence estimates when $p>0$. For a non-collinear point set $S\subseteq \F^2$ and a  non-degenerate symmetric or skew-symmetric bilinear form $\omega$, the number of distinct values of $\omega$ on pairs of points of $S$ is $\Omega\left[\min\left(|S|^{\frac{2}{3}},p\right)\right]$. This is also the best known bound over $\R$, where it follows from the Szemer\'edi-Trotter theorem. Also, a set $S\subseteq \mathbb F^3$, not supported  in a single semi-isotropic plane contains a point, from which $\Omega\left[\min\left(|S|^{\frac{1}{2}},p\right)\right]$ distinct distances to other points of $S$ are attained.

\end{abstract}

\maketitle

\section{Introduction} 
Let $\F$ be a field of characteristic $p$ and $\Pro^d$ the $d$-dimensional projective space over $\F$. Our methods do not work for $p=2$, but the results in view of constraints in terms of $p$ hold trivially for $p=O(1)$. As usual, we use the notation $|\cdot|$ for cardinalities of finite sets. The symbols $\ll$, $\gg,$ suppress absolute constants in inequalities, as well as respectively do $O$ and $\Omega$. Besides,  $X=\Theta(Y)$ means that $X=O(Y)$ and $X=\Omega(Y)$. The symbols $C$ and $c$ stand for absolute constants, which may sometimes change from line to line, depending on the context. When we turn to sum-products, we use the standard notation
$$A+B=\{a+b:\,a\in A,\,b\in B\}$$ for the sumset $A+B$ of $A, B\subseteq \mathbb F$, similarly for the product set $AB$. 

\medskip

The Szemer\'edi-Trotter theorem \cite{ST} on the number of incidences between lines and points in the Euclidean plane has many applications in  combinatorics. The theorem is also valid over $\C$, this was first proved by T\'oth \cite{T}. In positive characteristic, however, no universal satisfactory for applications point-line incidence  estimate is available. The current ``world record" for partial results in this direction for the prime residue field $\F_p$ is due to Jones \cite{J}.

This paper shows that in three dimensions there is an incidence estimate
between a set $P$ of  $m$ points and a set $\Pi$ of $n$ planes in $\Pro^3$, valid for any field of characteristic $p\neq 2$.  If $p>0$, there is a constraint that  $\min(m,n)=O(p^2).$ Hence, the result is trivial, unless $p$ is regarded as a large parameter.  Still, since our geometric set-up in terms of $\alpha$-and $\beta$-planes in the Klein quadric breaks down for $p= 2$, we have chosen to state that $p\neq 2$ explicitly in the formulation of main results.  Extending the results to more specific situations, when the constraint in terms of $p$ can be weakened may not be impossible but way beyond the methodology herein. A few more words  address this issue in the sequel.

\medskip
The set of incidences is defined as
\begin{equation}
I(P,\Pi) :=\{(q,\pi)\in P\times \Pi:\,q\in\pi\}.
\label{ins}\end{equation}

Over the reals, the point-plane incidence problem has been studied quite thoroughly throughout the past 25 years and several tight bounds are known. In general one can have all the points and planes involved to be incident to a single line in space, in which case the number of incidences is trivially $mn$. To do better than that, one needs  some non-degeneracy assumption regarding collinearity, and the results quoted next differ as to the exact formulation of such an assumption.

In the 1990 paper of Edelsbrunner et al. \cite{EGS} it was proven (modulo slow-growing factors that can be removed, see \cite{AS})  that if no three planes are collinear in $\mathbb R^3$,
\begin{equation}\label{egsest}
|I(P,\Pi)| = O\left(m^{\frac{4}{5}}n^{\frac{3}{5}}+m+n\right).
\end{equation}
This bound was shown to be tight for a wide range of $m$ and $n$, owing to a construction by Brass and Knauer \cite{BK}. A thorough review of the state of the art by the year 2007 can be found in the paper of Apfelbaum and Sharir \cite{AS}.

Elekes and T\'oth \cite{ET} weakened the non-collinearity assumption down to that all planes were  ``not-too-degenerate". That is a single line in a plane may support only a constant proportion of incidences in that plane. They proved a bound
\begin{equation}\label{etest}
|I(P,\Pi)| = O\left((mn)^{\frac{3}{4}} + m\sqrt{n} + n\right)
\end{equation}
and presented a construction, showing it to be generally tight. The constructions supporting the tightness of both latter estimates are algebraic and extend beyond the real case.

More recently, research in incidence geometry over $\R$ has intensified after the introduction of the polynomial partitioning technique in a breakthrough paper of Guth and Katz \cite{GK}. E.g., there is now  a ``continuous" generalisation of the bound \eqref{egsest} by Basit and Sheffer \cite{BS}:

\begin{equation}\label{bsest}
|I(P,\Pi)| = O^*\left(m^{\frac{4}{5}+\epsilon}n^{\frac{3}{5}}k^{\frac{2}{5}} +mk+n\right),
\end{equation}
where $k$ is the maximum number of collinear planes. For any $\epsilon>0$, the constant hidden in the $O^*$-symbol depends on $\epsilon$.

The proofs of the above results rely crucially on the order properties of $\mathbb R$. Some of them, say  \eqref{etest} extend over $\mathbb C$, for it is based on the Szemer\'edi-Trotter theorem. Technically harder partitioning-based works like \cite{BS} have so far defied generalisation beyond $\R$.

\medskip
This paper presents a different approach to point-plane incidences in the projective three-space $\Pro^3$. The approach appears to be robust enough to embrace, in principle, all fields $\F$, but for the apparently special case of characteristic $2$. When we have a specific field  $\F$ in mind, we use the notation $\mathbb{FP}$ for the projective line $\Pro$. The novelty of our approach is on its geometric side: we fetch and use extensively the classical XIX century Pl\"ucker-Klein formalism for line geometry in $\Pro^3$. This is combined with a recent algebraic incidence theorem for counting line-line intersections in three dimensions by Guth and Katz.

The work of Guth and Katz, see  \cite{GK} and the references contained therein for its predecessors, established  two important theorems. Both rested on the polynomial Nullstellensatz principle, which was once again demonstrated to be so efficient a tool for discrete geometry problems by Dvir, who used it to resolve the finite field Kakeya conjecture \cite{D}. The proof of the first Guth-Katz theorem, Theorem 2.10 in \cite{GK}, was in essence algebraic, using the Nullstellensatz  and basic properties of ruled surfaces, which come into play due to the use of the classical XIX century geometry Monge-Salmon theorem. See  \cite{Sa} for the  original exposition of the latter theorem, as well as  \cite{K} (wholly dedicated to the prominent role this theorem plays in today's incidence geometry) and Appendix in \cite{Ko}.

The second Guth-Katz theorem, Theorem 2.11 in \cite{GK}, introduced the aforementioned  method of polynomial partitioning of the real space, based on the Borsuk-Ulam theorem. It is the latter theorem of Guth and Katz that has recently attracted more attention and follow-ups. Since we work over any field, we cannot not use polynomial partitioning.

It is a variant of Theorem 2.10 from  \cite{GK} that plays a key role here, and is henceforth referred to as {\em the} Guth-Katz theorem. We share this, at least in part, with a recent work of Koll\'ar \cite{Ko} dedicated to point-line incidences in $3D$, in particular over fields with positive characteristic. 

\begin{theorem}[Guth-Katz] \label{gkt} Let $\mathcal L$ be a set of $n$ straight lines in $\R^3$. Suppose,  no more then two lines are concurrent. Then the number of pair-wise intersections of lines in $\mathcal L$ is 
$$
O\left(n^{\frac{3}{2}}+ nk\right),
$$
where $k$ is the maximum number of lines, contained in a plane or regulus.
\end{theorem}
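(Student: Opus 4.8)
The plan is to run the polynomial degree reduction of Guth and Katz, in its purely algebraic (Nullstellensatz plus ruled surfaces) incarnation. Since no three lines are concurrent, every point carries at most one pair of lines, so the number of pairwise intersections equals the number of $2$-rich points; write $I$ for this quantity. A form of degree $d$ in three variables has $\binom{d+3}{3}\sim d^{3}/6$ coefficients, while vanishing along a line imposes $d+1$ linear conditions, so for a suitable absolute constant $C$ there is a nonzero \emph{squarefree} $f$ of degree $d=C\sqrt n=O(\sqrt n)$ that vanishes on every line of $\mathcal L$. All $n$ lines then lie on the surface $Z(f)$, and I would factor it into irreducible components $Z(f)=\bigcup_i Z_i$ with $\deg Z_i=e_i$ and $\sum_i e_i\le d$, sorting the intersection points by the components carrying their two lines.

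If the two lines sit on distinct components $Z_i,Z_j$, their meeting point lies on $Z_i\cap Z_j$: fixing $\ell\subset Z_i$, each such point on $\ell$ is one of the at most $\sum_j e_j\le d$ places where $\ell$ meets the remaining components, so summing over the $n$ lines caps the \emph{cross-component} contribution at $O(nd)=O(n^{3/2})$. For lines sharing a component the Monge--Salmon mechanism enters through the flecnode polynomial $\mathrm{Flec}(f)$, of degree $O(d)$, which contains every line of $Z(f)$; by B\'ezout an irreducible component that is \emph{not} ruled, equivalently not a component of the flecnode locus, carries only $O(e_i^{2})$ lines. The ruled components are the planes, the quadrics and the higher ruled surfaces. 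On a plane or a regulus the hypothesis caps the lines at $k$, so $\sum_i\binom{k_i}{2}\le \tfrac12 k\sum_i k_i=O(nk)$ and the term $nk$ is accounted for; a cone forces all but two relevant lines through its vertex and contributes $O(1)$, while on a generic higher ruled surface distinct rulings are skew.

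The decisive case, and the one I expect to be the main obstacle, is that of two lines meeting on a single non-ruled component $Z_i$. Here the flecnode estimate supplies only the count $O(e_i^{2})$ of lines on $Z_i$, and the naive bound $\binom{O(e_i^{2})}{2}=O(e_i^{4})$ is far too lossy: a single irreducible non-ruled surface of degree $\sim\sqrt n$ may already carry all $n$ lines, so no simple induction on the number of lines closes, and I cannot afford more than $O(e_i^{3})$ crossings per component in order to sum to $\sum_i e_i^{3}\le(\sum_i e_i)^{3}=O(d^{3})=O(n^{3/2})$. Securing this needs genuinely more than a line count: one must show that a line on a non-ruled surface of degree $e$ meets only $O(e)$ of that surface's other lines, which I would try to extract from the fine structure of the curve cut on $Z_i$ by $\mathrm{Flec}(f)$ together with the parabolic (Hessian) locus on which any honest crossing of two coplanar tangent lines must lie. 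This non-ruled intersection estimate is the crux, and it is exactly where the argument is order-sensitive and must avoid any appeal to the ordering of $\R$.
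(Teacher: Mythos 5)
Your skeleton---trap the lines in a low-degree surface, split intersections into cross-component and within-component, use Salmon/flecnodes for non-ruled components, the hypothesis on $k$ for planes and reguli, and the concurrency hypothesis for cones---is the right family of ideas, and it is essentially the paper's. But the proposal founders exactly where you say it does, and the missing step is not one you can realistically supply. Because you cover \emph{all} $n$ lines at once, the dimension count forces $d=C\sqrt{n}$ with $C>\sqrt{6}$, so the flecnode/Salmon bound only says that non-ruled components carry $O(d^2)=O(n)$ lines, i.e.\ possibly all of them; you are then reduced to the unproven claim that a line in a non-ruled irreducible surface of degree $e$ meets only $O(e)$ other lines of that surface. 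That claim does not follow from the flecnode polynomial together with the Hessian/parabolic locus by any short argument; bounding $2$-rich points inside non-(doubly-)ruled components by $O(e^3)$ is essentially the content of later and substantially harder work of Guth and Zahl, not an ingredient of the Guth--Katz proof. So, as written, the argument does not close.

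The paper's proof (Theorem \ref{gkt2}, retold from Guth--Katz and Koll\'ar) avoids ever needing such an estimate, using two devices you omitted: a contradiction/induction on $n$, and a \emph{probabilistic} degree reduction. One assumes $I=Cn^{3/2}$ for the smallest offending $n$, keeps only lines meeting at least $\sim\frac{1}{4}C\sqrt{n}$ others, and deletes lines from the other family independently with survival probability $\rho\sim C^{-2}$; a polynomial vanishing on the surviving lines has degree $d=O(\sqrt{\rho n})=O(\sqrt{n}/C)$, and yet every rich line still meets more than $d$ surviving lines, hence is trapped in the surface. The factor-$C$ gain in the degree is decisive: non-ruled components now contain at most $11d^2=O(n/C^2)$ lines in total, a small fraction of $n$, so the induction hypothesis applied to that subfamily disposes of their internal intersections, while cross intersections are B\'ezout as in your argument; everything "small" is then absorbed against the assumed $Cn^{3/2}$. (Two minor corrections: on a higher-degree ruled non-cone surface the rulings are \emph{not} skew---the fact actually used is Lemma \ref{rss}, that a generator meets at most $d'-2$ other generators and at most two special lines, contributing $O(nd)$ in total; and your plane/regulus count $\sum_i\binom{k_i}{2}=O(nk)$ needs each line assigned to a single component, with shared-component pairs counted there.)
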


\medskip
The proof of Theorem \ref{gkt} goes about over the complex field.\footnote{For a reader not familiar with the proof of Theorem \ref{gkt}, that is Theorem 2.10 in \cite{GK}, we recommend  Katz's note \cite{K} for more than an outline of the proof. 
See also a post  {\sf www.terrytao.wordpress.com/2014/03/28/the-cayley-salmon-theorem-via-classical-differential-geometry/} by Tao and the links contained therein.} Moreover, it extends without major changes to any algebraically closed field, under the constraint $n=O(p^2)$ in  the positive characteristic case.
This was spelt out by Koll\'ar,  see \cite{Ko} Corollary 40,  with near-optimal values of constants.

To complete the introduction, let us briefly discuss, in slightly more modern terms, the ``continuous'' Monge-Salmon theorem, brought in by Guth and Katz to  discrete geometry. Suppose the field $\mathbb F$ is  algebraically closed field and $Z$ is a  surface in $\F\Pro^3$, defined as the zero set of a minimal polynomial $Q$ of degree $d$. A point $x\in Z$ is called {\em flechnodal} if there is a line $l$ with at least fourth order contact with $Z$ at $x$, that is apart from $x\in l$, at least three derivatives of $Q$ in the direction of $l$ vanish at $x$. Monge  showed that
flechnodal points are cut out by a homogeneous polynomial, whose degree Salmon claimed to be  equal to $11d-24$ (which is sharp for $d=3$, due to the celebrated Cayley-Salmon theorem). Thus, for an irreducible $Z$, either all points are flechnodal, or flechnodal points lie on a curve of degree $d(11d-24)$.  Over the complex field Salmon proved that assuming that all points of $Z$ are flechnodal implies that $Z$ is ruled. In positive characteristic it happens that there exist high degree non-ruled surfaces, where each point is flechnodal. But not for $d<p$. Voloch \cite{V} adapted the Monge-Salmon proof to modern terminology, $p>0$, $d<p$, and also conjectured that counterexamples may take place  only if $p$ divides $d(d-1)(d-2)$. 

The following  statement is implicit in the proof of Proposition 1 in \cite{V}.

\begin{theorem}[Salmon] \label{Salmon} An irreducible algebraic surface in $\Pro^3$ over an algebraically closed field $\F$, containing more than $d(11d-24)$ lines must be ruled, under the additional constraint that $d<p$ if $\F$ has positive characteristic $p$.
\end{theorem}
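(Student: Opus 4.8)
The plan is to exploit the flecnode polynomial already described in the passage preceding the statement, combine it with the Monge--Salmon--Voloch dichotomy, and reduce ruledness to an elementary Bézout count on $Z$. Throughout, let $Q$ be the minimal polynomial cutting out the irreducible surface $Z$ of degree $d$. First I would recall that the flechnodal locus of $Z$ is the intersection of $Z$ with the zero set of a homogeneous flecnode polynomial $\Phi$ of degree $11d-24$ (Monge's construction, valid in characteristic $p$ as soon as $d<p$, as made rigorous by Voloch). Its defining feature is that $x\in Z$ lies on $\{\Phi=0\}$ exactly when some line has fourth-order contact with $Z$ at $x$.

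The crucial elementary observation is that every line $\ell$ contained in $Z$ lies \emph{entirely} inside the flechnodal locus. Indeed, if $\ell\subseteq Z$ then $Q$ restricts to the zero polynomial along $\ell$, so at each point of $\ell$ all directional derivatives of $Q$ in the direction of $\ell$ vanish; in particular the fourth-order contact condition holds at every point of $\ell$, whence $\ell\subseteq Z\cap\{\Phi=0\}$. Now comes the dichotomy. Either $Z\subseteq\{\Phi=0\}$, i.e. every point of $Z$ is flechnodal, or $\Phi$ does not vanish identically on $Z$. In the first case the Monge--Salmon theorem, in the form established by Voloch under the hypothesis $d<p$, asserts that $Z$ is ruled, and we are done. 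In the second case, since $Z$ is irreducible of degree $d$ while $\{\Phi=0\}$ has degree $11d-24$ and does not contain $Z$, the intersection $C:=Z\cap\{\Phi=0\}$ is a curve of pure dimension one and, by Bézout, of degree $d(11d-24)$.

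Finally I would count. By the observation above, every line contained in $Z$ is an irreducible component of $C$, and each such line contributes at least $1$ to $\deg C$. Hence $Z$ carries at most $d(11d-24)$ lines. Taking the contrapositive yields the theorem: a surface supporting strictly more than $d(11d-24)$ lines cannot lie in the second branch of the dichotomy, so it must be ruled.

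The main obstacle is \emph{not} the combinatorial core just sketched, which is routine; it is entirely on the differential-geometric side in positive characteristic, namely securing (a) that Monge's flecnode polynomial retains degree $11d-24$ and behaves correctly when $d<p$, and (b) that the implication ``every point flechnodal $\Rightarrow$ ruled'' survives the passage from $\C$ to characteristic $p$. These are precisely the steps that fail without the constraint $d<p$ and that Voloch's adaptation is designed to control. Within the present plan they are invoked as the quoted input of Theorem~\ref{Salmon}'s source, so no independent work on them is attempted here.
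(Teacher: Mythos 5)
Your proposal is correct and follows essentially the same route as the paper: the paper does not give a self-contained proof but justifies the statement by exactly this flecnode-polynomial dichotomy (lines in $Z$ lie in the flechnodal locus, so either all of $Z$ is flechnodal and hence ruled, or B\'ezout bounds the lines by $d(11d-24)$), citing Voloch \cite{V} for the two hard inputs --- the degree $11d-24$ of the flecnode polynomial and the implication ``everywhere flechnodal $\Rightarrow$ ruled'' under the constraint $d<p$ --- just as you do.
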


Once Theorem \ref{Salmon} gets invoked within the proof of Theorem \ref{gkt}, the rest of it uses  basic properties of ruled surfaces,  for a summary see \cite{Ko}, Section 7. We complement it with some additional background material in Section \ref{ruled}, working with the Grassmannian parameterising the set of lines in $\Pro^3$, that is the Klein quadric.

\section{Main results}
The main geometric idea of this paper is to interpret incidence problems between points and planes in $\Pro^3$ as line-line incidence problems in a projective three-quadric $\Ge$. $\Ge$ is contained in the Klein quadric $\Ka$ representing the space of lines in the ``physical space'' $\Pro^3$ in the ``phase space'' $\Pro^5$. $\Ge$ is the Klein image of a so-called {\em regular line complex} and has many well-known geometric properties.  In comparison to $\Pro^3$, where the space of lines is four-dimensional, the space of lines in $\Ge$ is three-dimensional, and this enables one to satisfy  the no-multiple-concurrencies hypotheses of Theorem \ref{gkt}.  It will also turn out that the parameters denoted as $k$  in both the point-plane incidence estimate (\ref{bsest}) and Theorem \ref{gkt} are closely related.

Our main result is as follows.

\begin{theorem}  \label{mish} Let $P, \Pi$ be sets of points and planes, of cardinalities respectively $m$ and $n$ in  $\Pro^3$. Suppose, $m\geq n$ and if $\F$ has positive characteristic $p$, then $p\neq 2$ and $n=O(p^2)$. Let $k$ be the maximum number of collinear planes.

Then
\begin{equation}\label{pups}
|I(P,\Pi)|=O\left( m\sqrt{n}+ km\right).\end{equation}
\end{theorem}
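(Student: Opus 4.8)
The plan is to translate the point–plane incidences into line–line intersections on a three–dimensional quadric through the Pl\"ucker–Klein correspondence, and then feed the resulting configuration into Theorem \ref{gkt}. First I would pass to the Klein quadric $\Ka\subset\Pro^5$, the Grassmannian of lines of $\Pro^3$. Each point $q\in P$ determines the $\alpha$-plane $A_q\subset\Ka$ of lines through $q$, and each plane $\pi\in\Pi$ the $\beta$-plane $B_\pi\subset\Ka$ of lines lying in $\pi$. The basic fact, available precisely because $p\neq 2$ so that the two rulings of $\Ka$ are honest $2$-planes, is that $A_q\cap B_\pi$ is a line of $\Ka$ (the pencil of lines through $q$ in $\pi$) when $q\in\pi$ and is empty otherwise, while two $\alpha$-planes, or two $\beta$-planes, always meet in a single point (namely the line $qq'$, respectively $\pi\cap\pi'$). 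Thus an incidence is exactly a pencil $A_q\cap B_\pi$.

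Next I would traverse $\Ka$ by a generic hyperplane $H\cong\Pro^4$ and set $\Ge=\Ka\cap H$, a smooth quadric threefold, the Klein image of a regular line complex. Intersecting with $H$ replaces $A_q$ by a line $\ell_q=A_q\cap H$ and $B_\pi$ by $\ell'_\pi=B_\pi\cap H$ lying on $\Ge$. A line $A_q\cap B_\pi$ of $\Ka$ meets $H$ in one point, so $\ell_q$ and $\ell'_\pi$ intersect precisely when $q\in\pi$; whereas the single point $A_q\cap A_{q'}$, respectively $B_\pi\cap B_{\pi'}$, generically misses $H$, so no two point-lines and no two plane-lines meet. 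Hence the pairwise intersections of the $m+n$ lines $\{\ell_q\}\cup\{\ell'_\pi\}$ are exactly the incidences of $I(P,\Pi)$, each counted once. Crucially, I can arrange that at most two of these lines are concurrent: if three met at a point $x\in\Ge$, the corresponding line $\lambda_x\subset\Pro^3$ would either pass through two points of $P$ or lie in two planes of $\Pi$, forcing the Pl\"ucker point of $qq'$, respectively of $\pi\cap\pi'$, to lie on $H$; as these are finitely many points of $\Pro^5$, a generic $H$ avoids them all. Finally, projecting $\Ge$ from a generic point onto $\Pro^3$ turns the $\ell_q,\ell'_\pi$ into honest lines in $3$-space with the same intersection pattern, so Theorem \ref{gkt} applies. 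Over a finite field ``generic'' requires a large ground field, and this, together with the $O(p^2)$ ceiling built into the positive–characteristic forms of Theorem \ref{gkt} and Theorem \ref{Salmon}, is where the hypothesis $n=O(p^2)$ enters; if need be one passes to $\overline{\F}$, which preserves incidences.

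I would then extract the asymmetric bound by partitioning rather than applying Theorem \ref{gkt} to all $m+n$ lines at once, which would only yield $m^{3/2}$. Since no two plane-lines meet and no two point-lines meet, the incidence count is $I=\sum_q d_q$, where $d_q$ is the number of plane-lines met by $\ell_q$. Partition the $m$ point-lines into $\lceil m/n\rceil$ groups of at most $n$ lines, and for each group apply Theorem \ref{gkt} to the $\le 2n$ lines formed by that group together with all $n$ plane-lines. Since at most two lines are concurrent, the incidences within a group are $O(n^{3/2}+nk')$, where $k'$ is the largest number of our lines on a common plane or regulus of the ambient $3$-space. Summing over the $\lceil m/n\rceil$ groups gives $I=O(m\sqrt n+mk')$, the claimed shape once $k'=O(k)$.

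The main obstacle, and the step I would treat most carefully, is the identification of $k'$ with a bounded multiple of the combinatorial parameter $k$, the maximum number of collinear planes. This is a statement about the geometry of the regular line complex $\Ge$: one must show that a family of the lines $\ell_q,\ell'_\pi$ lying on a common plane or on a common regulus of $\Ge$ corresponds, under the Klein correspondence, to points of $P$ or planes of $\Pi$ sharing a common line of $\Pro^3$, so that indeed $k'=O(k)$. Building this dictionary between planes and reguli on $\Ge$ and collinear configurations in $\Pro^3$ is where the background on ruled surfaces and on the Klein image of a linear complex from Section \ref{ruled} is needed, and it is the genuinely geometric core of the argument; by contrast, the genericity bookkeeping for $H$ and for the projection centre is routine once one works over $\overline{\F}$ and assumes the field large enough.
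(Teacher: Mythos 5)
Your geometric set-up (the $\alpha$/$\beta$-plane dictionary, the generic hyperplane section $\Ge$, the genericity bookkeeping, the projection to $\Pro^3$, and the partition into $\lceil m/n\rceil$ groups) coincides with the paper's, and that part is sound. The gap is exactly at the step you flagged as the ``main obstacle,'' and it is not a routine dictionary-building exercise: the identification $k'=O(k)$ is in fact \emph{false}. The parameter $k'$ in Theorem \ref{gkt} counts lines of \emph{both} families lying in a common plane or regulus, whereas $k$ in Theorem \ref{mish} bounds only collinear \emph{planes}; collinear points are completely unconstrained. If all $m$ points of $P$ lie on one line $l$ with Klein image $L$, then every $\alpha$-plane $A_q$, $q\in l$, is contained in the tangent hyperplane $T_L\Ka$, so the $m$ lines $\ell_q=A_q\cap H$ are pairwise skew lines on the quadric surface $\Ge\cap T_L\Ka$, hence all in one ruling of it; a generic projection maps the $\Pro^3$-section $T_L\Ka\cap H$ linearly isomorphically onto the target $\Pro^3$, so your projected configuration contains $m$ lines in a single regulus. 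Thus $k'\geq m$ even when $k=O(1)$, and your per-group bound $O(n^{3/2}+nk')$ becomes $O(n^{3/2}+nm)$, which after summation is $O(m\sqrt{n}+m^2)$ --- vacuous. (Coplanarity is harmless: two skew lines cannot be coplanar, so a plane holds at most one line per family; it is precisely the reguli coming from collinear points that break the black-box application.)

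What is genuinely needed, and what the paper does, is a \emph{bipartite} version of the Guth--Katz theorem (Theorem \ref{gkt2}) in which the collinearity parameter counts only $L_\beta$-lines per intersection of $\Ge$ with a $\Pro^3\subset\Pro^4$. This cannot be extracted from Theorem \ref{gkt} used as a black box; one must rerun the Guth--Katz argument (induction on $\min(m,n)$, random sampling, a polynomial of degree $O(\sqrt{n}/C)$ vanishing on the $\beta$-lines, Salmon's Theorem \ref{Salmon}, elimination of cones and of higher-degree ruled factors via Lemma \ref{rss}) until the contradiction hypothesis $|I|=Cm\sqrt{n}$ produces a doubly-ruled quadric factor containing at least two lines of the $\alpha$-family in one ruling, crossed by $\Omega(C\sqrt{n})$ lines of the $\beta$-family. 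Only at that point does the dictionary enter: lifting back to $\Ge\subset\Pro^4$, the two $\alpha$-lines are skew and span a three-hyperplane that must contain every $\beta$-line meeting both of them, and by Lemma \ref{lem} such a hyperplane section corresponds to planes of $\Pi$ through a common line of $\Pro^3$; hence $C\sqrt{n}=O(k)$ and $|I|=O(km)$. Note that this argument never needs to control reguli made of $\alpha$-lines --- richness is measured in $\beta$-lines throughout --- which is exactly what your reduction cannot arrange at the level of Theorem \ref{gkt}'s hypotheses.
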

The statement of the theorem can be reversed in an obvious way, using duality in the case when the number of planes is greater than the number of points. Note that the $km$ term may dominate only if $k\geq\sqrt{n}$.

The estimate (\ref{pups}) of Theorem \ref{mish} is a basic universal estimate. It is weaker than the above quoted estimates (\ref{egsest}), as well as  (\ref{bsest}) for small values of $k$, and slightly weaker than
(\ref{etest}). Later in Section \ref{example}, for completeness sake, we present a construction, not so dissimilar from those in \cite{BK} and \cite{ET}, showing that in the case $n=m$ and $k=m^{\frac{1}{2}}$, the estimate (\ref{pups}) is tight, for any admissible $n$. 

\begin{remark}\label{sharps} Let us argue that in positive characteristic and  solely under the constraint $\min(m,n)=O(p^2)$ the main term in the estimate  \eqref{pups} cannot be improved. This suggests that analogues of stronger Euclidean point-plane incidence bounds like \eqref{egsest} do not extend to positive characteristic without additional assumptions stronger than in Theorem \ref{mish}.

Let $\F=\F_p$,  and take the point set $P$ as a smooth cubic surface in $\F^3$, so $|P|=O(p^2)$. Suppose, $|\Pi|>|P|$, so the roles of $m, n$ in Theorem \ref{mish} get reversed. A generic plane intersects $P$ at $\Omega(p)$ points.  By the classical Cayley-Salmon theorem (which follows from the statement of Theorem \ref{Salmon} above)   $P$ may contain at most $27$ lines. Delete them, still calling $P$ the remaining positive proportion of $P$. Now no more than three points in $P$ are collinear, so $k=3$. However, for a set of  generic planes $\Pi$, $|I(P,\Pi)|=\Omega(|\Pi|\sqrt{|P|})$, which matches up to constants the bound \eqref{pups} when $|\Pi|>|P|$.
\end{remark}

We also give  two applications of Theorem \ref{mish} and
show how it yields reasonably strong geometric incidence estimates over fields with positive characteristic. The forthcoming Theorem \ref{spr} claims that any plane set $S\subset \F^2$ of $N$ non-collinear points determines $\Omega\left[\min\left(N^{\frac{2}{3}},p\right)\right]$ distinct pair-wise bilinear -- i.e., wedge or dot -- products, with respect to any origin.

If $S=A\times A$, $A\subseteq \F$ this improves to a sum-product type inequality
\begin{equation}
|AA+AA|=\Omega\left[\min\left(|A|^{\frac{3}{2}},p\right)\right].
\label{2spr}\end{equation}
In the special case of $A$ being a multiplicative subgroup of $\F^*_p$, the same bound was proved by Heath-Brown and Konyagin \cite{HBK} and improved by V'jugin and Shkredov \cite{VS} (for suitably small multiplicative subgroups) to $\Omega\left(\frac{|A|^{\frac{5}{3}}} {\log^{\frac{1}{2}}|A|}\right).$
Theorem \ref{mish} becomes a vehicle to extend bounds for multiplicative subgroups  to approximate subgroups. 

For more applications of Theorem \ref{mish}  to questions of sum-product type see \cite{RRS}. Results in the latter paper include a new state of the art sum-product estimate
$$\max(|A+A|,\,|AA|)\gg |A|^{\frac{6}{5}},\qquad\mbox{for }|A|<p^{\frac{5}{8}},$$ obtained from Theorem \ref{mish} in a manner, similar to how the sum-product  exponent $\frac{5}{4}$ gets proven over $\R$ using the Szemer\'edi-Trotter theorem in the well-known construction by Elekes \cite{E0}. The previously known best sum-product exponent $\frac{12}{11}-o(1)$ over $\F_p$ was proven by the author \cite{R}, ending a stretch of many authors' incremental contributions based on the so-called {\em additive pivot} technique introduced by Bourgain, Katz and Tao \cite{BKT}.\footnote{In a forthcoming paper with E. Aksoy, B. Murphy, and  I. D. Shkredov we present further applications of Theorem \ref{mish} to sum-product type questions in positive characteristic.}

Such reasonably strong bounds in positive characteristic have been available so far only for subsets of finite fields, large enough relative to the size of the field  itself: see, e.g., \cite{HI}. Theorem \ref{mish} enables one to extend these bounds to small sets, and the barrier it imposes in terms of $p$ is often exactly where the two types of bounds over $\F_p$ meet. See \cite{RRS} for more discussion along these lines.

The same can be said about our second application of Theorem \ref{mish}, Theorem \ref{erd}. It yields a new result for the Erd\H os distance problem in three dimensions in positive characteristic, which is not too far off what is known over the reals. A set $S$ of $N$ points not supported in a single semi-isotropic plane in $\F^3$, contains a point, from which $\Omega\left[\min\left(\sqrt{N},p\right)\right]$ distances are realised. Semi-isotropic planes are planes spanned by two mutually orthogonal vectors $\ee_1,\ee_2$, such that $\ee_1\cdot \ee_1=0$, while $\ee_2\cdot \ee_2\neq 0$. They always exist in positive characteristic -- see \cite{HI} for explicit constructions in finite fields -- and one can have point sets with very few distinct distances within these planes.

We mention in passing another application of Theorem \ref{mish}, which is  Corollary \ref{intersections} appearing midway through the paper, concerning the prime residue field $\F_p$. Given {\em any} family of $\Omega(p^2)$ straight lines in $G={SL}_2(\F_p)$, their union takes up a positive proportion of $G$. In Lie group-theoretical terminology these lines are known as generalised horocycles, that is right cosets of one-dimensional subgroups conjugate to one of the two one-dimensional subgroups of triangular matrices with $1$'s on the main diagonal. (See, e.g., \cite{BM} as a general reference.) A similar claim in  $\F_p^3$ is  false, for all the lines may lie in a small number of  planes. Nonetheless our Corollary \ref{intersections} is not new and follows from a result of Ellenberg and Hablicsek \cite{EH}. They extend to $\F_p^3$ another, earlier relative to Theorem \ref{gkt}, algebraic theorem of Guth and Katz over $\C$, from another breakthrough paper \cite{GKprime}. The  assumption required for that in \cite{EH} is that all planes be relatively ``poor''.

\begin{remark} The presence of the characteristic $p$ in the constraint of Theorem \ref{mish} and its applications makes a positive characteristic field $\F$ somewhat morally  just $\F_p$, for Theorem \ref{Salmon} is not true otherwise. Replacing this constraint by more elaborate ones in the context of finite extensions of $\F_p$ may be possible for $p>2$ provided that classification of exceptional cases as to Salmon's theorem becomes available. Voloch \cite{V} conjectures that  an irreducible flechnodal surface of degree $d$ may be unruled only if $p$ divides $d(d-1)(d-2)$ and gives  evidence in this direction. See also \cite{EH} for examples of such {\em flexy} surfaces and discussion from the incidence theory viewpoint.\label{rem} \end{remark}

\medskip
Let us give an outline of the proof of Theorem \ref{mish} to motivate the forthcoming background material in Section \ref{setup}. First off, Theorem \ref{gkt} needs to be extended to the case of pair-wise intersections between two families of $m$ and $n$ lines, respectively. The only way to do so to meet our purpose, in view of Remark \ref{sharps}, is the cheap one. If $m$ is much bigger than $n$,  partition the $m$ lines into $\sim \frac{m}{n}$ groups of  $\sim n$ lines each and apply a generalisation to $\F$ of Theorem \ref{gkt} separately to count incidences of each group with the family of $n$ lines. 

Let us proceed assuming $m=n$. Let $q\in P,\,\pi\in\Pi$ be a point and a plane in $\Pro^3,$ and $q\in\pi$. Draw in the plane $\pi$ all lines, incident to the point $q$. In line geometry literature this figure is called a {\em plane pencil} of lines. It is represented by a line in the space of lines, that is the Klein quadric ${\mathcal K}$, a four-dimensional hyperbolic projective quadric in $\Pro^5$, whose points are in one-to-one correspondence with lines in $\Pro^3$ via the so-called {\em Klein map}. If the characteristic of $\F$, $p\neq 2$, the line pencil gets represented in $\Ka$ as follows. The Klein image of the family of all lines incident to  $q$ is  a copy of $\Pro^2$ contained in $\Ka$, a so-called $\alpha$-plane. The family of  all lines contained in $\pi$ is also represented by a copy of $\Pro^2$ contained in $\Ka$, a so-called $\beta$-plane. A pair of planes of two distinct types in ${\mathcal K}$ typically do not meet. If they do, this happens along a copy of $\Pro^1,$ a line in ${\mathcal K}$, which is the Klein image of the above line pencil, if and only if $q\in\pi$.

Thus the number of incidences $|I(P,\Pi)|$ equals the number of lines along which the corresponding sets of $\alpha$ and $\beta$-planes meet in ${\mathcal K}$. One can now restrict the arrangement of planes in ${\mathcal K}$ from $\Pro^5$ to a generic hyperplane $\Pro^4$ intersecting $\Ka$ transversely. Its intersection with $\Ka$ is a three-dimensional sub-quadric $\Ge$, whose pre-image under the Klein map is called a {\em regular  line complex.} There is a lot of freedom in choosing the generic subspace $\Pro^4$ to cut out $\Ge$. Or, one can fix the subspace $\Pro^4$ in the ``phase space'' $\Pro^5$ and realise this freedom to allow for certain projective transformations of the ``physical space'' $\Pro^3$ and its dual.  For there is a one-to-one correspondence between regular line complexes and so-called {\em null polarities} -- transformations from $\Pro^3$ to its dual by non-degenerate skew-symmetric matrices. See \cite{PW}, Chapter 3 for general theory. 

The benefit of having gone down in dimension from $\Ka$ to  $\Ge$ is that $\alpha$ and $\beta$-planes restrict to $\Ge$ as lines, which may generically meet  only if they are of different type. This is because two planes of the same type intersect at one and only one point in $\Ka$. So one can choose the subspace $\Pro^4$, defining $\Ge$ in such a way that it contains none of the above finite number of points. If the field $\F$ is finite, the latter finite set may appear to be sizable in comparison with the size of $\Ge$ itself. However, just like in the proof of Theorem \ref{gkt} one works in the algebraic closure of  $\F$, which is infinite.  Thus the only place where the characteristic $p$ of $\F$ makes a difference is within the body of the Guth-Katz theorem to ensure the validity of Salmon's Theorem \ref{Salmon}. The corresponding constraint in terms of $p$ is stated explicitly and with constants in \cite{Ko}, Corollary 40.

Having restricted the $\alpha$ and $\beta$-planes as lines in $\Ge$ we end up with two families of lines there, such that lines of the same type do not meet. The number of incidences $|I(P,\Pi)|$ equals the number of pair-wise intersections of these lines. The lines satisfy the input conditions of Theorem \ref{gkt}, the only difference being that they live in the three-quadric $\Ge\subset\Pro^4$, rather than $\Pro^3$. But one can always project a finite family of lines from higher dimension to $\Pro^3$ so that skew lines remain skew. Thereupon we find ourselves in $\Pro^3$, and what's left for the proof of Theorem \ref{mish} has been essentially worked out by Guth-Katz and Koll\'ar. This seems a bit like a waste, for the space of lines in $\Ge$ is three, rather than four-dimensional. Yet we could not conceive a better theorem for $\Ge$, but for a chance of slightly better constants. Plus, Remark \ref{sharps} suggests that a stronger theorem about $\Ge$ must have more restrictive assumptions than Theorem \ref{mish}.

\medskip
To conclude this section, we briefly summarise the key steps in the beautiful proof of Guth and Katz, which we retell with small modifications as the proof of Theorem \ref{gkt2} in the main body of the paper. We could have almost got away with just citing \cite{Ko}, Sections 3 and 4 but for a few extra details, since we still need to bring the collinearity parameter $k$ into play.

 Assuming that there are some $C n^{\frac{3}{2}}$ pair-wise line intersections in $\Pro^3$ enables one to put all the lines, supporting more than roughly the average number of incidences per line, on a polynomial surface $Z$ of degree $d\sim \frac{ \sqrt{n} }{C},$ so most of the incidences come from within factors of $Z$. One can use induction in $n$ to effectively assume that the number of these lines is $\Omega(n)$. Then Salmon's theorem implies that $Z$ must have a ruled component, containing a vast majority of the latter lines. One should not bother about non-ruled factors by the induction hypothesis. However, a non-cone ruled factor of degree $d>2$ can only support a relatively small number of incidences. Since lines of the same type do not meet, having many incidences within planes or cones is not an option either.
 
Hence if there are $C n^{\frac{3}{2}}$ incidences, $Z\subset \Pro^3$ must have a doubly-ruled quadric factor, containing many lines. Once we lift $Z$ back to $\Ge$, this means having many lines of each type in the intersection of $\Ge\subset\Pro^4$ with a $\Pro^3$. Finally, an easy argument in the forthcoming Lemma \ref{lem} shows that intersections of $\Ge$ with a $\Pro^3$ can be put into correspondence with what happens within the original arrangement of points and planes in the ``physical space''. Namely lines of the two types meeting in $\Ge\cap\Pro^3$ represent precisely incidences of the original points and planes along some line in the $\Pro^3$. This brings the collinearity parameter $k$ into the incidence estimate and completes the proof.

\section{Acknowledgment}

The author is grateful to Jon Selig for educating him about the Klein quadric. Special thanks to J\'ozsef Solymosi for being the first one to point out a mistake in the estimate of Theorem \ref{mish} in the original version of the paper and two anonymous Referees for their patience and attention to detail.  This research was conceived in part when the author was visiting the Institute for Pure and Applied Mathematics (IPAM), which is supported by the National Science Foundation.

\section{Geometric set-up}
\subsection{Background} \label{setup}
We begin with a brief introduction of the Klein, alias Klein-Pl\"ucker quadric ${\mathcal K}$. See \cite{PW}, Chapter 2 or \cite{JS}, Chapter 6 for a more thorough treatment.

The space of lines in $\Pro^3$ is represented as a projective quadric, known as  the {\em Klein quadric} $\mathcal K$ in $\Pro^5$, with projective coordinates $(P_{01}:P_{02}:P_{03}:P_{23}:P_{31}:P_{12})$, known as {\em Pl\"ucker coordinates.} The latter {\em Pl\"ucker vector} yields the {\em Klein image} of a line $l$ defined by a pair of points $q=(q_0:q_1:q_2:q_3)$ and $u=(u_0:u_1:u_2:u_3)$ in $\Pro^3$ that it contains, under the {\em Klein map}, defined as follows:
\begin{equation}
P_{ij}=q_iu_j-q_ju_i,\qquad i,j=0,\ldots,3.
\label{Pc}\end{equation}
It is easy to verify that once $\{P_{ij}\}$ are viewed as homogeneous coordinates, this definition does not depend on the particular choice of the pair of points on the ``physical line'' $l$, and there are $6=4\cdot 3/2$ independent projective  Pl\"ucker coordinates $P_{ij}$. We use the capital $L\in \Pro^5$ for the Pl\"ucker vector, which is the Klein image of the line $l\subset \Pro^3$.

For an affine line in $\mathbb{F}^3$, obtained by setting $q_0=u_0=1$, the  Pl\"ucker coordinates acquire the meaning of a projective pair of three-vectors $(\om: \ve)$, where $\om=(P_{01},P_{02},P_{03})$ is a vector in the direction of the line and for any point $\qu=(q_1,q_2,q_3)$  on the line, $\ve =  (P_{23},P_{31},P_{12}) = \qu\times\om$ is the line's moment vector\footnote{In this section we use boldface notation for three-vectors.  The essentially Euclidean vector product notation is to keep the exposition as elementary as possible: in  $\mathbb{F}^3$ the notation  $\ve = \qu\times\om$ means only that $\ve$ arises from $\om$ after multiplication on the left by the skew-symmetric matrix $T=ad({\qu})$, with $T_{12}=-q_3,\,T_{13}=q_2,\,T_{23}=-q_1$.\label{ads}}  with respect to the fixed origin. Lines in the plane at infinity have $\om=\boldsymbol 0$. We use the boldface notation for three-vectors throughout.

Conversely, one can denote $\om=(P_{01},P_{02},P_{03}),\; \ve=(P_{23},P_{31},P_{12}),$  the Pl\"ucker coordinates then become $(\om:\ve)$, and treat $\om$ and $\ve$ as vectors in $\mathbb{F}^3$, bearing in mind that as a pair they are projective quantities. The equation of the Klein quadric ${\mathcal K}$ in $\Pro^5$ is
\begin{equation}
P_{01}P_{23}+P_{02}P_{31}+P_{03}P_{12}=0,\;\mbox{ i.e., }\; \om\cdot\ve=0.
\label{Klein}\end{equation}
More formally, equation \eqref{Klein} arises after writing out, with the notations \eqref{Pc}, the truism
$$
\det\left(\begin{array}{cccccc} q_0&u_0&q_0&u_0\\q_1&u_1&q_1&u_1\\q_2&u_2&q_2&u_2\\q_3&u_3&q_3&u_3\end{array}\right) = 0.
$$
Two  lines  $l,l'$ in  $\Pro^3$, with Klein images $$L=(P_{01}:P_{02}:P_{03}:P_{23}:P_{31}:P_{12}),\qquad L'=(P'_{01}:P'_{02}:P'_{03}:P'_{23}:P'_{31}:P'_{12})$$ meet  in  $\Pro^3$ if and only if
\begin{equation}\label{intersection}
P_{01}P'_{23} +  P_{02}P'_{31} + P_{03}P'_{12} + P'_{01}P_{23} +  P'_{02}P_{31} + P'_{03}P_{12}\;=\;0.\end{equation}

The left-hand side of \eqref{intersection} is called the {\em reciprocal product} of two Pl\"ucker vectors. If they are viewed as $L=(\om:\ve)$ and  $L'=(\om':\ve')$, the intersection condition becomes
\begin{equation}\label{intersectionv}
\om\cdot \ve'+ \ve\cdot \om' = 0.\end{equation}
Condition \eqref{intersection} can be restated as 
\begin{equation}
L^T\mathcal QL'  = 0,\qquad
\mathcal Q = \left(\begin{array}{ccc} 0 & I_3\\ I_3 & 0\end{array}\right),\label{qum}\end{equation}
where $I_3$ is the $3\times 3$ identity matrix. 

It is easy to see by \eqref{intersection}, after taking the gradient in \eqref{Klein} that a hyperplane $\Pro^4$ in $\Pro^5$ is tangent to $\Ka$ at some point $L$ if and only if the covector defining the hyperplane is itself in the Klein quadric in the dual space. Moreover, it follows from (\ref{intersection}) that  the intersection  of $\Ka$ with the tangent hyperplane $T_L \Ka\cap \Ka$ through $L$ consists of $L'\in \Ka$, which are the Klein images of all lines $l'$ in $\Pro^3$, incident to the line $l$, represented by $L$. The union of all these lines $l'$ is called a {\em singular line complex.}

\subsubsection{Two rulings by planes and line complexes}
The largest dimension of a projective subspace contained in $\Ka$ is two.  This can be seen as follows. After the coordinate change $\boldsymbol x = \om-\ve$, $\boldsymbol y = \om+\ve$, the equation \eqref{Klein} becomes
\begin{equation}
\|\boldsymbol x\|^2= \|\boldsymbol y\|^2.
\label{xform}\end{equation}
This equation cannot be satisfied by a  $\Pro^3$. It can be satisfied by a $\Pro^2$ if and only if $\boldsymbol y= M\boldsymbol x,$ for some orthogonal matrix $M$.  We further assume that ${\rm char}(\mathbb F)\neq 2$, which is crucial. For then there are two cases, corresponding to $\det M=\pm 1$. The two cases correspond to  two ``rulings'' of  $\Ka$ by planes, which lie entirely in it, the fibre space of each ruling being $\Pro^3$.

To characterise the two rulings, called  $\alpha$ and $\beta$-planes, corresponding to $\det M=\pm1$, respectively,  one returns to the original coordinates $(\om:\ve)$. After a brief calculation, see \cite{JS}, Section 6.3, it turns out  that
Pl\"ucker vectors in a single $\alpha$-plane in $\Ka$ are Klein images of lines in $\Pro^3$, which are concurrent at some point $(q_0:q_1:q_2:q_3)\in \Pro^3$. If  the concurrency point is $(1:\qu)$, which is identified with $\qu\in \mathbb F^3$,  the $\alpha$-plane is a graph $\ve = \qu\times \om$. Otherwise, an ideal concurrency point  $(0:\om)$ gets identified with some fixed $\om$, viewed as a  projective vector. The corresponding $\alpha$-plane is the union of the set of parallel lines in $\mathbb{F}^3$ in the direction of $\om$, with Pl\"ucker coordinates $(\om:\ve)$, so $\ve\cdot\om=0,$ by \eqref{Klein}, and the set of lines in the plane at infinity incident to the ideal point $(0:\om)$. The latter lines have Pl\"ucker coordinates $(\boldsymbol 0:\ve),$ with once again $\ve\cdot\om=0$.

Similarly, Pl\"ucker vectors lying in a $\beta$-plane represent co-planar lines in $\Pro^3$. A ``generic'' $\beta$-plane is a graph $\om = \ur\times \ve$, for some $\ur \in \mathbb F^3$. The case $\ur =\boldsymbol 0$ corresponds to the plane at infinity, otherwise the equation of the co-planarity plane in $\mathbb{F}^3$ becomes
\begin{equation}\ur\cdot \qu =-1.\label{refer}\end{equation}
If $\ur$ gets replaced by a fixed ideal  point $(0:\ve)$, the corresponding $\beta$-plane comprises lines, coplanar in planes through the origin: $\ve\cdot \qu = 0$. The $\beta$-plane in the Klein quadric is formed by the set of lines with Pl\"ucker coordinates $(\om:\ve)$, plus the set of lines through the origin in the co-planarity plane. The latter lines have Pl\"ucker coordinates $(\om:\boldsymbol 0)$. In both cases one requires $\om\cdot\ve = 0$.

Two planes of the same ruling of $\Ka$ always meet at a point, which is the line defined by the two concurrency points in the case of $\alpha$-planes. An $\alpha$ and a $\beta$-plane typically do not meet. If they do -- this means that the concurrency point $q$, defining the $\alpha$-plane lives in the plane $\pi$, defining the $\beta$-plane.
The intersection is then a line, a copy of $\Pro^1$ in ${\mathcal K}$, representing a {\em plane pencil of lines}. These are the lines in $\Pro^3$, which are co-planar in $\pi$ and concurrent at $q$. Conversely, each line in ${\mathcal K}$ identifies the pair ($\alpha$-plane, $\beta$-plane), that is the plane pencil of lines uniquely. Moreover points $L,L'\in \Ka$ can be connected by a straight line in $\Ka$ if and only if the corresponding lines $l,l'$ in $\Pro^3$ meet, cf. \eqref{qum}.

From non-degeneracy of the reciprocal product it follows that the reciprocal-orthogonal projective subspace to a $\alpha$ or $\beta$-plane is the plane itself. Hence, a hyperplane in $\Pro^5$ contains a $\alpha$ or $\beta$-plane if and only if it is a $T_L(\Ka)$ at some point $L$, lying in the plane.

It follows that a singular line complex arises if and only if the equation of the hyperplane intersecting $\Ka$ is $(\boldsymbol u:\boldsymbol w)^T(\boldsymbol \omega:\boldsymbol v)=0$, with the dual vector
$(\boldsymbol u:\boldsymbol w)$ itself such that $\boldsymbol u\cdot\boldsymbol w=0$. Otherwise the Klein pre-image of the intersection of the hyperplane with $\Ka$ is called a  regular line complex.

We remark that a geometric characterisation of a regular line complex is that it is a set of invariant lines of some {\em null polarity}, that is a projective map from $\Pro^3$ to its dual $\Pro^{3*}$ defined via a $4\times 4$ non-degenerate skew-symmetric matrix. In particular, a null polarity assigns to each point $q\in \Pro^3$ a plane $\pi(q)$, such that $q\in \pi$. See \cite{PW}, Chapter 3 for more detail.

A particular example of the Klein image of a regular line complex arises if one sets $\omega_3=v_3$, i.e. $x_3=0$ in coordinates \eqref{xform}. One can identify $(-x_1:x_2:0:y_1:y_2:1)$ with $\mathbb F^4$, getting
$$
x_1y_1 -  x_2y_2 = 1
$$
for the affine part of $\Ge$, which can be identified with the group $SL_2(\mathbb F)$.

The following lemma describes the intersection of a regular complex with a singular one. 
\begin{lemma}
\label{lem} Let $l$ be a line in $\Pro^3$, represented by $L\in\Ka$. Then   $\Ka\cap T_L\Ka$ contains $\alpha$ and $\beta$-planes, corresponding, respectively to points on $l$ and planes containing $l$.  Given two hyperplanes $S_1$, $S_2$  in $\Pro^5$, suppose  $\Ka\cap S_1$ is the Klein image of a regular line complex. Consider the intersection $\Ka\cap S_1\cap S_2$. If the field $\F$ is algebraically closed, $\Ka\cap S_1\cap S_2 = \Ka\cap S_1\cap S'_2$, where $S'_2$ is tangent to $\Ka$ at some point $L$.  That is, $\Ka\cap S_2'$ is the Klein image of the singular line complex of lines in $\Pro^3$ meeting the Klein pre-image $l$ of $L$. \end{lemma}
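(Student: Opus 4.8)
The plan is to treat the lemma's two assertions separately. For the first, I would read off the structure of the tangent section directly from facts already recorded in Section~\ref{setup}. The intersection $\Ka\cap T_L\Ka$ is, by the discussion following \eqref{intersection}, exactly the singular line complex consisting of those $L'\in\Ka$ whose lines $l'$ meet $l$. To identify which ruling-planes it contains, I would invoke the stated criterion that a hyperplane contains an $\alpha$- or $\beta$-plane precisely when it equals $T_L\Ka$ for some $L$ lying in that plane. Since $L$ lies in the $\alpha$-plane of a point $q$ iff $q\in l$, and in the $\beta$-plane of a plane $\pi$ iff $l\subset\pi$, the hyperplane $T_L\Ka$ contains exactly the $\alpha$-planes of points of $l$ and the $\beta$-planes of planes through $l$, which is the claim.

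For the substantive second part I would pass to the dual $\Pro^{5*}$ and work with the pencil of hyperplanes through the codimension-two space $S_1\cap S_2$. Write $S_i=\{s_i=0\}$ for covectors $s_1,s_2$, which I may assume linearly independent (the case $S_1=S_2$ being degenerate). The elementary observation driving everything is that every hyperplane $\{\lambda s_1+\mu s_2=0\}$ with $\mu\neq0$ cuts $S_1$ along the same $\Pro^3$, namely $S_1\cap S_2$: on $S_1=\{s_1=0\}$ the form $\lambda s_1+\mu s_2$ restricts to $\mu s_2$, with the same zero locus as $s_2$. Hence replacing $S_2$ by any such member $S_2'$ of the pencil leaves $\Ka\cap S_1\cap S_2$ unchanged, and the whole problem reduces to finding a member of the pencil tangent to $\Ka$.

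To locate that member I would use the tangency criterion in dual form: recalling $\mathcal Q$ from \eqref{qum} with $\mathcal Q^{-1}=\mathcal Q$, a hyperplane $\{s=0\}$ is tangent to $\Ka$ exactly when $s^T\mathcal Q s=0$, the point of tangency being $L=\mathcal Q s\in\Ka$. Restricting this dual quadratic form to the pencil yields the binary form $\phi(\lambda,\mu)=\lambda^2\,(s_1^T\mathcal Q s_1)+2\lambda\mu\,(s_1^T\mathcal Q s_2)+\mu^2\,(s_2^T\mathcal Q s_2)$. Regularity of $\Ka\cap S_1$ means precisely that $s_1$ is not on the dual quadric, i.e.\ $s_1^T\mathcal Q s_1\neq0$. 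Over the algebraically closed field $\F$ the form $\phi$ has a projective root $(\lambda_0:\mu_0)$, and nonvanishing of the $\lambda^2$-coefficient forces $\mu_0\neq0$. Setting $s_2'=\lambda_0 s_1+\mu_0 s_2$ then gives a hyperplane $S_2'$ tangent to $\Ka$ and lying in the pencil with $\mu_0\neq0$, so $\Ka\cap S_1\cap S_2'=\Ka\cap S_1\cap S_2$; and by the first part $\Ka\cap S_2'=\Ka\cap T_L\Ka$ is the singular complex of lines meeting $l=\mathcal Q s_2'$, as required.

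I expect the only genuine subtlety to be the guarantee $\mu_0\neq0$: a priori the root furnished by algebraic closure could be $(1:0)$, which would merely return $S_1$ and prove nothing. The point to emphasise is that this degenerate root is excluded exactly by the regularity hypothesis, through $s_1^T\mathcal Q s_1\neq0$, so the hypothesis is used in an essential rather than cosmetic way. I would also note that algebraic closure enters only via solvability of the single binary quadratic $\phi$, and that no genericity or dimension hypothesis on $\Ka\cap S_1\cap S_2$ is needed for the set-theoretic identity to hold.
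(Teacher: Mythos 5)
Your proof is correct and takes essentially the same route as the paper's: both pass to the dual $\Pro^{5*}$, form the pencil (line) spanned by the covectors of $S_1$ and $S_2$, and use algebraic closure to intersect that line with the dual Klein quadric, with regularity of $\Ka\cap S_1$ ensuring the tangent hyperplane so obtained is a genuine replacement for $S_2$. The only difference is one of detail: you spell out the two points the paper leaves implicit, namely that any pencil member $\lambda s_1+\mu s_2$ with $\mu\neq0$ cuts $S_1$ in the same $\Pro^3$, and that $s_1^T\mathcal Q s_1\neq0$ excludes the degenerate root $(1:0)$.
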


\begin{proof} The first statement follows immediately by definitions above. 
To prove the second statement, suppose $S_2$ is not tangent to $\Ka$. Let the two line complexes be defined by dual vectors $(\boldsymbol u:\boldsymbol w)$ and $(\boldsymbol u':\boldsymbol w')$. If $\F$ is algebraically closed, the line $t_1(\boldsymbol u:\boldsymbol w) + t_2 (\boldsymbol u':\boldsymbol w')$ in the dual space will then intersect the Klein quadric in the dual space, a point of intersection $L$ defining $S_2'$.  

Note, however, that if $S_2$ is itself tangent to $\Ka$ at $L$, then there is only one solution, $L$ itself, otherwise there are two.
\end{proof}

\subsubsection{Reguli} For completeness purposes and since reguli appear in the formulation of Theorem \ref{gkt} we give a brief account in this section. See also the next section on ruled surfaces.

The $\alpha$ and $\beta$-planes represent a degenerate case when a subspace $S= \Pro^2$ of $\Pro^5$ is contained in $\Ka$.  Assume that $\mathbb{F}$ is algebraically closed, then any $S$ intersects $\Ka$. The non-degenerate situation would be $S$ intersecting $\Ka$ along a irreducible conic curve. This  curve in $\Ka$ is called a {\em regulus}, and the union of lines corresponding to in in the physical space forms a single ruling of a doubly-ruled quadric surface. One uses the term regulus to refer to both the above curve in $\Ka$ and the family of lines in $\Pro^3$ this curve represents.

Choose affine coordinates, so that the equations of the two-plane $S$ can be written as
$$
A\om + B\ve = \boldsymbol 0,
$$
where $A,B$ are some $3\times3$ matrices. For points in $S\cap\Ka$, which do not represent lines in the plane at infinity in $\Pro^3$, we can write $\ve =\qu\times \om$, where $\qu$ is some point in $\mathbb{F}^3$, on  the line  with Pl\"ucker coordinates $(\om:\ve)$, and $\om\neq \boldsymbol 0$. If $T$ denotes the skew-symmetric matrix $ad(\qu)$ we obtain
$$
(A-BT)\om =\boldsymbol 0\qquad \Rightarrow\qquad \det(A-BT)=0.
$$
This a quadratic equation in $\qu$, since $T$ is a $3\times 3$ skew-symmetric matrix, so $\det T=0$.  If the above equation has a  linear factor in $\qu$, defining a plane  $\pi\subset\Pro^3$, then $S\cap \Ka$ contains a line, which represents a pencil of lines in $\pi$.  If the above quadratic polynomial in $\qu$ is irreducible, and  $\mathbb{F}$ is algebraically closed, one always gets a quadric irreducible surface in $\Pro^3$ as the union of lines in the regulus, see Lemma \ref{rs} in the next section.

In the latter case, by Lemma \ref{lem}, the two-plane $S$  in $\Pro^5$ can be obtained as the intersection of three four-planes, tangent to $\Ka$ at some three points $L_1,L_2,L_3$, corresponding to three mutually skew lines in $\Pro^3$. Thus a regulus can be redefined as the set of all lines in $\Pro^3$, meeting three given mutually skew lines $l_1,l_2,l_3$. Its Klein image is a conic.

Each regulus has a reciprocal one, the Klein image of the union of all lines incident to any three lines, represented in the former regulus. These lines form the second ruling of the same quadric doubly-ruled surface. See \cite{JS}, Section 6.5.1 for coordinate description of reciprocal reguli. \label{rgl}

\subsubsection{Algebraic ruled surfaces} \label{ruled} Differential geometry of ruled surfaces is a rich and classical field of study.  From a historical perspective, 
it was Pl\"ucker who pretty much invented the subject in the two-volume treatise \cite{Pl}, which was completed after his death by Klein. 

We give the minimum background on algebraic ruled surfaces in $\Pro^3$. In this whole section  the field $\F$ is assumed to be algebraically closed, of characteristic $p\neq 2$. See \cite{PW}, Chapter 5 for the discussion in the case $p=0$. 
In positive characteristic the basics of algebraic theory of ruled surfaces are in many respects the same, and for our modest designs we need only these basics. 

A ruled surface is defined as a smooth projective surface over an algebraically closed field that is birationally equivalent to a surface $\Pro\times \mathcal C$ where $\mathcal C$ is a smooth projective curve of genus $g\geq0$. See, e.g. \cite{Ba}, \cite{Li} for general theory of algebraic surfaces. 
Also Koll\'ar  (see \cite{Ko}, Section 7)  presents in terms of more formal  algebraic geometry a brief account of facts about ruled surfaces, necessary for the proof of Theorem \ref{gkt}. Since he only mentions the Klein quadric implicitly through a citation we review these facts below.  

Informally, an algebraic ruled surface  is a surface in $\mathbb P^3$ composed of a polynomial family of lines. We assume the viewpoint from Chapter 5 of the book by  Pottmann and Wallner \cite{PW}, where an algebraic ruled surface is identified with  a polynomial curve $\Gamma$ in the Klein quadric. The union of  lines, Klein pre-images of the points of $\Gamma$ draws a surface $Z\subset \mathbb P^3$ called the {\em point set} of $\Gamma$. It is easy to show that $Z$ is then an algebraic surface, that is a projective variety of dimension $2$. A line in $Z$, which is the Klein pre-image of a point of $\Gamma$ is called a {\em generator.} A regular generator $L$, that is a regular point of $\Gamma\subset \mathbb K$ is called {\em torsal} in the special case when the tangent vector to $\Gamma$ at $L$ is also in $\Ka$. The Klein pre-image of a regular torsal generator necessarily supports a singular surface point, called {\em cuspidal point}.
An irreducible component of $\Gamma$ is referred to as a {\em ruling} of $\Gamma$. The same term {\em ruling} is applied to the corresponding family of lines, ruling the surface $Z$.

Here is s basic genericity statement about ruled surfaces. See, e.g.,  \cite{PW}, Chapter 5.

\begin{lemma}\label{rs} Let $\Gamma$ be an algebraic curve in $\Ka$, with no irreducible component contained in the intersection of $\Ka$ with any $\Pro^2$. Let $Z$ be the point set of $\Gamma$.  The subset of $Z$, which is the union of all pair-wise intersections of different rulings of $\Gamma$ and all cuspidal points is a subset of the set of singular points of $Z$. It is contained in an algebraic subvariety of dimension $\leq 1$. 

Besides, the curve $\Gamma$ is irreducible if and only if its point set $Z$ is irreducible.
 \end{lemma}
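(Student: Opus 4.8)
The plan is to realise $Z$ as the image of an abstract two-dimensional ruled variety sitting over $\Gamma$ and to read off its geometry from the projection to $\Pro^3$. First I would form the incidence variety
\[ \widetilde Z = \{(x,L)\in\Pro^3\times\Gamma : x\in l\}, \]
where $l\subset\Pro^3$ denotes the Klein pre-image of $L$; the condition $x\in l$ is a Pl\"ucker-bilinear relation in $(x,L)$, so $\widetilde Z$ is closed. The first projection $\widetilde Z\to\Gamma$ is a fibration all of whose fibres are the projective lines $l\cong\Pro^1$, whence $\dim\widetilde Z=\dim\Gamma+1=2$, and $\widetilde Z$ is irreducible whenever $\Gamma$ is. Writing $\rho$ for the (proper) projection to $\Pro^3$, we have $Z=\rho(\widetilde Z)$, a projective variety. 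To see $\dim Z=2$ it suffices that $\rho$ have finite fibres: the fibre over $q\in\Pro^3$ is $\{L\in\Gamma: q\in l\}$, which is $\Gamma$ intersected with the $\alpha$-plane of $q$, a $\Pro^2$ inside $\Ka$. If this fibre were positive-dimensional it would contain a component of $\Gamma$ lying in $\Ka\cap\Pro^2$, against the hypothesis; so the fibres are finite and $\dim Z=2$.

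Next I would settle the irreducibility equivalence. The forward direction is immediate, since the image of the irreducible $\widetilde Z$ under $\rho$ is irreducible. For the converse I argue contrapositively: if $\Gamma=\Gamma_1\cup\cdots\cup\Gamma_r$ splits into $r\ge2$ components, then each point set $Z_i$ is an irreducible surface by the forward direction, and $Z=Z_1\cup\cdots\cup Z_r$. The crucial point is that the assignment $\Gamma_i\mapsto Z_i$ is injective. Indeed, a non-planar irreducible $\Gamma_i$ is neither a conic nor contained in an $\alpha$- or $\beta$-plane, so $Z_i$ is neither a quadric nor a cone; by the classical fact that the only doubly-ruled surfaces are quadrics, $Z_i$ then carries a \emph{unique} ruling, which recovers $\Gamma_i$ as the corresponding curve in $\Ka$. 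Consequently the $Z_i$ are pairwise distinct irreducible surfaces and $Z$ is reducible, as required. This is precisely the step where the hypothesis excluding components inside $\Ka\cap\Pro^2$ is indispensable: a quadric is swept by two reciprocal reguli with the \emph{same} point set, so were reguli admitted the map $\Gamma_i\mapsto Z_i$ would fail to be injective.

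It remains to locate the exceptional set. A point lying on generators of two different rulings $\Gamma_i\ne\Gamma_j$ belongs to $Z_i\cap Z_j$; as $Z_i\ne Z_j$ are distinct irreducible components of $Z$, this intersection lies in the singular locus $\mathrm{Sing}(Z)$ and has dimension $\le1$. The cuspidal points are singular points of $Z$ by construction, a regular torsal generator forcing the surface to fold along such a point. Thus the union of all these cross-ruling intersections and cuspidal points is contained in $\mathrm{Sing}(Z)$. Finally, since $\F$ is algebraically closed, hence perfect, the reduced surface $Z$ is generically smooth, so $\mathrm{Sing}(Z)$ is a proper closed subvariety of $Z$ and therefore has dimension $\le1$. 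This yields both assertions of the first part of the lemma.

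The main obstacle is the converse half of the irreducibility statement: concretely, the injectivity of $\Gamma_i\mapsto Z_i$, which rests on uniqueness of the ruling of a non-quadric surface and is exactly what the exclusion of reguli buys. A secondary, purely technical, point is the claim that $\mathrm{Sing}(Z)$ is a proper subset of $Z$ in positive characteristic; this is safe because $\F$ is perfect and $Z$ is reduced, so generic smoothness still holds and the dimension bound follows automatically.
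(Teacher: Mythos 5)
Your proof is correct, but the comparison with the paper is somewhat one-sided: the paper does not prove this lemma at all. It states explicitly ``We do not give a proof but for a few remarks'' and defers to \cite{PW}, Chapter 5, the remarks being confined to explaining why the hypothesis excluding components of $\Gamma$ inside $\Ka\cap\Pro^2$ is necessary (a plane is the point set of many rulings; a smooth quadric is the common point set of two reciprocal reguli). Your argument supplies what the paper leaves to the literature, and does so along standard lines: the incidence variety $\widetilde Z$ gives $\dim Z=2$ and the forward irreducibility implication; the hypothesis gives finiteness of the fibres of $\rho$ (no component of $\Gamma$ in an $\alpha$-plane) and, via the classical fact that a surface with two distinct lines through a general point is a plane or a quadric, injectivity of $\Gamma_i\mapsto Z_i$ and hence the converse implication; intersections of distinct components lie in $\mathrm{Sing}(Z)$, which has dimension $\leq 1$ by generic smoothness of the reduced surface $Z$ over a perfect field. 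Two points deserve tightening. First, in the injectivity step you should note explicitly that when $Z_i=Z_j$ but $\Gamma_i\neq\Gamma_j$, the two generators through a general point of $Z_i$ are genuinely distinct: if they coincided generically, the two irreducible curves would share infinitely many points and hence be equal. Also the classical fact should be quoted as ``plane or quadric''; the plane case is excluded because $\Gamma_i$ lies in no $\beta$-plane, which you have available but do not invoke at that moment. Second, your treatment of cuspidal points is an assertion rather than an argument, but none is required: the paper \emph{defines} a cuspidal point as the singular surface point supported on a regular torsal generator, so membership in $\mathrm{Sing}(Z)$ is definitional in this framework, and your ``by construction'' is the honest formulation. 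With these glosses your proof is complete and, unlike the paper's treatment, self-contained.
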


We do not give a proof but for a few remarks. The conditions of Lemma 
\ref{rs} rule out the cases when $Z$ has a plane or smooth quadric component. Clearly, a plane can be the point set for many rulings of lines lying therein, a smooth quadric has two reciprocal reguli, and is therefore an example when the union of the two reguli, not irreducible as a ruled surface has an irreducible point set.

Let $Z$ further denote the point set of a ruling. Suppose, $Z$ contains three lines $l_{1}, l_{2}, l_3$ incident to every line in the ruling. If, say $l_{1}$ and $l_{2}$ meet, then $Z$ is either a plane, and hence the ruling lies in an $\alpha$ or  $\beta$-plane, depending on whether or not $l_3$ also meets $l_1$ and $l_2$ at the same point. If the three lines are mutually skew, then the ruling is contained in the intersection of three singular line complexes $T_1,T_2,T_3$, corresponding to the three lines. Their intersection is represented in $\Ka$ as the latter's transverse section by a $\Pro^2$ along a conic, that is a regulus. Then $Z$ is a irreducible quadric surface, which has a reciprocal ruling: the set of lines incident to any three lines in the former ruling. See the above discussion of reguli, as well as \cite{JS}, Chapter 6 for more details.

Conversely, if a ruling is contained in a $\alpha$-plane, then $Z$ is a cone: all the generators are incident at the concurrency point defining the $\alpha$-plane. It a ruling lies in a $\beta$-plane, then $Z$ is a plane. If the ruling arises as a result of transverse intersection of a $\Pro^2$ with $\Ka$, it is either a pencil of lines or a regulus. In the former case $Z$ is a plane, in the latter case an irreducible doubly-ruled quadric. 

\medskip
An important part of the proof of Theorem \ref{gkt} is the  claim that one cannot have too many line-line incidences within a higher degree irreducible ruled surface, which is not a cone. It is essentially the rest of this section that is directly relevant to Theorems \ref{gkt} and \ref{mish}.

\begin{lemma}\label{rss} Let $\Gamma$ be an algebraic ruled surface of degree $d$, whose point set $Z$ has no plane component. Then the degree of $Z$ equals $d$. A generator in a ruled surface of degree $d$, which does not have a cone component, meets at most $d-2$ other generators.
\end{lemma}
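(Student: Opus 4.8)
The plan is to translate both assertions into statements about how the degree-$d$ curve $\Gamma\subset\Ka$ meets a suitable hyperplane of $\Pro^5$, and then to read off the answer from B\'ezout's theorem using the incidence dictionary of Section \ref{setup}. Recall the two facts I will lean on: two lines of $\Pro^3$ meet exactly when their Klein images $L,L'$ satisfy $L^T\mathcal{Q}L'=0$; and the locus $T_L\Ka\cap\Ka$ of Klein images of lines meeting a fixed line $l$ is cut out of $\Ka$ by the tangent hyperplane $T_L\Ka$. Both conditions on $L'$ are linear, so in each case I am intersecting $\Gamma$ with a single hyperplane.

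First I would prove $\deg Z=d$ by intersecting $Z$ with a generic line $\ell\subset\Pro^3$, so that $\deg Z=|Z\cap\ell|$. Writing $L_\ell$ for the Klein image of $\ell$, a generator $m$ meets $\ell$ exactly when its Klein image lies on the hyperplane $H_\ell=\{L:\,L_\ell^T\mathcal{Q}L=0\}$, so the generators meeting $\ell$ are the points of $\Gamma\cap H_\ell$. For generic $\ell$ the hyperplane $H_\ell$ contains no component of $\Gamma$ and meets it transversely, whence $|\Gamma\cap H_\ell|=d$ by B\'ezout. It then remains to see that $z\mapsto(\text{generator through }z)$ is a bijection from $Z\cap\ell$ onto this set. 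This is exactly where the absence of a plane component enters: by the structure of ruled surfaces (Lemma \ref{rs}) the points of $Z$ lying on more than one generator form at most a curve, which a generic $\ell$ avoids, so each $z\in Z\cap\ell$ lies on a single generator $m_z$; and since a generic $\ell$ is not itself a generator, distinct $z$ give distinct $m_z$, each meeting $\ell$ in one point. (Had $Z$ contained a plane, a generic $\ell$ would meet it in one point lying on many coplanar generators, collapsing the count.) Thus $\deg Z=|Z\cap\ell|=|\Gamma\cap H_\ell|=d$.

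For the second assertion I would fix a generator $m_0$ whose Klein image $L_0$ is a smooth point of $\Gamma$ and consider $H_0=T_{L_0}\Ka$. The generators meeting $m_0$ are precisely the points of $\Gamma\cap H_0$, so — the Klein map being injective on $\Ka$ — the number of \emph{other} generators met by $m_0$ is at most $|(\Gamma\cap H_0)\setminus\{L_0\}|$. Two inputs control this. First, since $\Gamma\subset\Ka$ and $H_0$ is tangent to $\Ka$ at $L_0$, the tangent line to $\Gamma$ at $L_0$ lies in $T_{L_0}\Ka=H_0$, so the intersection multiplicity satisfies $i(L_0;\Gamma,H_0)\geq 2$. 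Second, the no-cone hypothesis makes the intersection proper, i.e. $\Gamma\not\subseteq H_0$: were a generic generator to meet every other generator, then all generators would pairwise meet, and by the classical dichotomy a family of pairwise incident lines is either concurrent (a cone) or coplanar (a plane), both excluded. With $\Gamma\not\subseteq H_0$, B\'ezout gives $\sum_{P\in\Gamma\cap H_0}i(P;\Gamma,H_0)=d$, and subtracting the contribution $\geq 2$ at $L_0$ leaves at most $d-2$ further points, hence at most $d-2$ other generators meeting $m_0$.

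The routine genericity verifications are not the crux. What must be gotten right is the interaction of each hypothesis with its degeneracy: in the first part, that the point-to-generator assignment is generically bijective, which fails precisely on a plane component; in the second part, that $\Gamma\not\subseteq T_{L_0}\Ka$ for a generic generator, which is exactly the content of excluding cones, combined with the tangency bound $i(L_0;\Gamma,H_0)\geq 2$. I expect the main obstacle to be pinning down the genericity of $L_0$ cleanly: for the finitely many generators at which $\Gamma$ is singular the multiplicity at $L_0$ only grows, so the bound $d-2$ survives, and for any exceptional smooth generator met by all others (a director generator) the statement is understood generically, such generators being finite in number and absorbable into the error term of the incidence application.
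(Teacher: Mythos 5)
Your proposal is correct and takes essentially the same route as the paper's proof: both parts reduce to intersecting the degree-$d$ curve $\Gamma\subset\Ka$ with a tangent hyperplane $T_L\Ka$ (for the Klein image of a generic line in the first part, of the generator itself in the second), applying B\'ezout, and using the tangency of $\Gamma$ to $T_{L_0}\Ka$ at $L_0$ to charge multiplicity at least $2$ to the generator itself. Your write-up is in fact more explicit than the paper's about the two delicate points it leaves implicit --- the generic bijection between points of $Z\cap\ell$ and generators, and the properness $\Gamma\not\subseteq T_{L_0}\Ka$ together with the finitely many exceptional ``director'' generators --- but the underlying argument is the same.
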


\begin{proof} By the preceding argument, the theorem is true for $d=2$, so one may assume that conditions of Lemma \ref{rs} are satisfied. 
Since $\F$ is algebraically closed, a generic line $l$ in $\Pro^3$ intersects $Z$ exactly $d$ times at points meeting one generator each. It follows that for the Klein image $L$ of $l$, one has 
$$
L^T \mathcal Q L' = 0,
$$
for $d$ distinct $L'\in \Gamma$. Thus the curve $\Gamma$ meets a hyperplane $T_L\Ka$ in $\Pro^5$ transversely $d$ times, and hence has degree $d$.

If in the latter equation $L$ no longer represents a generic line in $\Pro^3$ but a generator of $\Gamma$, and the above equation must still have $d$ solutions, counting multiplicities. Besides $L'=L$ has multiplicity at least $2$, since the intersection of $\Gamma$ with $T_L\Ka$ at $L$ is not transverse.
\end{proof}
It also follows that the point set of an irreducible ruled surface $\Gamma$  of degree $d\geq3$ cannot be a smooth projective surface. The point set of $\Gamma$  will necessarily have  singular points where two generators meet or a cuspidal points of  torsal generators.

It is also well known that the point set of an irreducible ruled surface of degree $d\geq3$ can support at most two non-generator {\em special lines} which intersect each generator. This is because special lines must be skew to each other, or  one has a plane. But then if there are three or more special lines, one has a quadric.

\subsection{Point-plane incidences in $\Pro^3$ are line incidences in a three-quadric in $\Pro^4$} \label{geom}
We can now start moving towards Theorem \ref{mish}. Assume that $\F$ is algebraically closed or pass to the algebraic closure still calling it $\F$. It is crucial for this section that $\F$ not have characteristic $2$. Let  $\Ka\subset\Pro^5$ be the Klein quadric, $\Ge=\Ka\cap S$, for a four-hyperplane $S$ whose defining covector is not in the Klein quadric in $\Pro^{5*}$. E.g., $\Ge$ may be defined by the equation $P_{03}=P_{12}$. Since $\Ge$ contains no planes, each $\alpha$ or $\beta$-plane in ${\mathcal K}$ intersects ${\mathcal G}$ along a  line. We therefore have two line families $L_\alpha,L_\beta$  in $\Ge$. We warn the reader from confusing lines lying in the three-quadric $\Ge\subset \Pro^4\subset\Ka\subset\Pro^5$ in the ``phase space'' with lines from the regular line complex in the ``physical space'' $\Pro^3$ that $\Ge$ is the Klein image of.

The following lemma states that one can assume $L_\alpha\cap L_\beta=\emptyset,$ as well as that the lines within each family do not meet each other.

\begin{lemma} \label{tog} Suppose, $\F$ is algebraically closed and not of characteristic $2$. To every finite point-plane arrangement $(P,\Pi)$ in $\Pro^3$  one can associate two distinct families of lines $L_\alpha,L_\beta$ contained in some three quadric $\Ge=\Ka\cap S$, where the four-hyperplane $S$ is not tangent to $\Ka$, with the following property. No two lines of the same family meet; $|L_\alpha|=m$, $|L_\beta|=n$, and $|I(P,\Pi)| = |I(L_\alpha,L_\beta)|$, where $I(L_\alpha,L_\beta)$ is the set of pair-wise incidences between the lines in $L_\alpha$ and $L_\beta$.

Alternatively, one can regard $S$ as fixed and find a new point-plane arrangement $(P',\Pi')$ in $\Pro^3$ with the same $m,n$ and the number of incidences, to which  the above claim applies.

Besides, if $k_m,k_n$ are the maximum numbers of, respectively, collinear points and planes in $P,\Pi$, they are now the maximum numbers of lines in the families $L_\alpha,L_\beta$, respectively, contained in the intersection of $\Ge\subset S$ with a projective three-subspace in $S$.
\end{lemma}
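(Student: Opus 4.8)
The plan is to use the standard dictionary between points/planes in $\Pro^3$ and $\alpha$/$\beta$-planes in $\Ka$, and then cut everything by a sufficiently generic hyperplane $S$. To each $q\in P$ I associate its $\alpha$-plane $A_q\subset\Ka$ (Klein images of lines through $q$), and to each $\pi\in\Pi$ its $\beta$-plane $B_\pi$ (Klein images of lines in $\pi$). By the background material two $\alpha$-planes meet in a single point of $\Ka$, two $\beta$-planes meet in a single point, and $A_q\cap B_\pi\neq\emptyset$ iff $q\in\pi$, in which case it is the line $\lambda_{q\pi}\subset\Ka$ representing the plane pencil. I would pick the covector defining $S$ in $\Pro^{5*}$ to avoid the following finite list of proper closed conditions: (i) the covector lies off the dual Klein quadric, so $S$ is not tangent and $\Ge=\Ka\cap S$ is a regular complex containing no $\Pro^2$; (ii) $S$ avoids each of the $\binom m2+\binom n2$ points where two same-type planes meet; (iii) $S$ contains none of the $m+n$ planes $A_q,B_\pi$, nor any pencil line $\lambda_{q\pi}$; (iv) $S$ contains none of the finitely many ``spurious pencil lines'' introduced below. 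Since $\F$ is algebraically closed, hence infinite, a finite union of proper subvarieties of $\Pro^{5*}$ cannot exhaust it, so such $S$ exists.

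Set $L_\alpha=\{A_q\cap S\}$ and $L_\beta=\{B_\pi\cap S\}$. By (i) and (iii) each $A_q,B_\pi$ is a $\Pro^2$ not contained in $S$, so it meets $S$ in a genuine line, giving $|L_\alpha|=m$, $|L_\beta|=n$. Two same-type lines are disjoint, since $(A_q\cap S)\cap(A_{q'}\cap S)=(A_q\cap A_{q'})\cap S$ is the single point $A_q\cap A_{q'}$ intersected with $S$, which is empty by (ii); likewise for $\beta$. For incidences, $(A_q\cap S)\cap(B_\pi\cap S)=(A_q\cap B_\pi)\cap S$ is empty when $q\notin\pi$, and when $q\in\pi$ equals the single point $\lambda_{q\pi}\cap S$, one point because $\lambda_{q\pi}\not\subset S$ by (iii). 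As two distinct lines meet in at most one point, counting meeting pairs yields $|I(L_\alpha,L_\beta)|=|I(P,\Pi)|$.

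For the alternative formulation I would invoke the correspondence between regular line complexes and null polarities: such a complex is cut by a covector off the dual quadric, equivalently by a non-degenerate skew-symmetric $4\times4$ form, and $\PGL_4(\F)$ acts on these by congruence. Since any two non-degenerate alternating forms on $\F^4$ are congruent, $\PGL_4(\F)$ acts transitively on regular complexes, and its induced action on $\Ka$ preserves the two rulings. A collineation carrying the generic $\Ge$ above to the fixed standard one therefore transports $(P,\Pi)$ to an arrangement $(P',\Pi')$ with the same $m,n$; incidences and collinearities being projective invariants, the incidence count is preserved, and the previous paragraph applies with $S$ held fixed.

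Finally, for the collinearity claim I argue both inclusions. If $q_1,\dots,q_j$ lie on a line $\ell$ with Klein image $L$, then every line through any $q_i$ passes through $q_i\in\ell$ and so meets $\ell$; hence $A_{q_i}\subset T_L\Ka$ by Lemma \ref{lem}, and intersecting with $S$ places all $j$ lines $A_{q_i}\cap S$ in the common three-space $S\cap T_L\Ka\subset S$. Conversely, suppose $A_{q_1}\cap S$ and $A_{q_2}\cap S$ lie in a common $\Pro^3\subset S$; being skew they span a unique $\Pro^3$, and since each $A_{q_i}\subset T_{L_{12}}\Ka$ for $L_{12}$ the Klein image of $\overline{q_1q_2}$, that $\Pro^3$ is forced to equal $S\cap T_{L_{12}}\Ka$. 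A further line $A_{q_i}\cap S$ lies in it iff $A_{q_i}\cap S\subset T_{L_{12}}\Ka$, i.e.\ iff it represents lines through $q_i$ meeting $\overline{q_1q_2}$; this forces either $q_i\in\overline{q_1q_2}$, or $A_{q_i}\cap S$ to equal the single pencil line of lines through $q_i$ in the plane $\langle q_i,\overline{q_1q_2}\rangle$. These latter pencil lines, ranging over the finitely many configuration-determined lines $\overline{q_aq_b}$ and the points $q_i$, are exactly the family excluded by condition (iv), and symmetrically for $\beta$. Hence, generically, the lines of $L_\alpha$ (resp.\ $L_\beta$) in a common $\Pro^3$ correspond precisely to collinear points (resp.\ planes), matching $k_m$ (resp.\ $k_n$). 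I expect this converse to be the main obstacle: one must verify that a single genericity condition on $S$, involving only finitely many lines determined by the configuration, suppresses all spurious coplanarities so that the count does not exceed the collinearity parameter.
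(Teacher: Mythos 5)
Your proposal is correct, and its core construction --- intersecting the $\alpha$- and $\beta$-planes with a hyperplane $S$ chosen generically so that its covector avoids the dual Klein quadric, $S$ misses the finitely many points where same-type planes meet, and $S$ contains no pencil line --- is exactly the paper's argument; your conditions (i)--(iii) match the paper's requirements on the covector. You diverge in two places, both defensibly. First, for the fixed-$S$ claim the paper applies the null polarity attached to $S$ to $(P,\Pi)$, obtaining an arrangement with the roles of points and planes reversed, and then passes to the dual arrangement; you instead use that all non-degenerate alternating forms on $\F^4$ are congruent, so $\PGL_4(\F)$ acts transitively on regular complexes, and you transport $(P,\Pi)$ by a collineation whose induced map on $\Pro^5$ preserves $\Ka$ and both rulings and carries your generic $\Ge$ to the fixed one. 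Both arguments rest on the same correspondence between regular complexes and null polarities; yours is marginally cleaner in that it never interchanges points and planes. Second, and more substantively: the paper dismisses the collinearity claim with the single sentence that it ``follows from Lemma \ref{lem}'', which really only yields your forward inclusion. Your converse analysis isolates the genuine subtlety: the line $A_q\cap S$ lies in $S\cap T_L\Ka$ not only when $q\in l$, but also when $l$ is contained in the plane that the null polarity of $S$ assigns to $q$ (equivalently, $q$ lies on the polar line $l^*$ of $l$), so without your condition (iv) a three-space section of $\Ge$ can collect up to $\left|P\cap l\right|+\left|P\cap l^*\right|$ lines of $L_\alpha$, exceeding $k_m$ by as much as a factor of $2$. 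That slack is harmless for Theorem \ref{mish}, where it would be absorbed into the implicit constant, but the exact equality asserted in the lemma does require suppressing these spurious pencils; since they are indexed by pairs consisting of a point of $P$ and a line spanned by two points of $P$ (and dually for $\Pi$), your extra genericity condition is finite and legitimate. On this last point your write-up is more complete than the paper's own proof.
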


\begin{proof}
Suppose, we have an incidence $(p,\pi)\in P\times\Pi$. This means that the $\alpha$-plane defined by $q\in P$ and the $\beta$-plane
defined by $\pi\in \Pi$ intersect along a line in ${\mathcal K}$. There are at most $m^2+n^2$ points in ${\Ka}$ where planes of the same type meet and at most $mn$ lines along which the planes of different type may possibly intersect.

We must choose $\Ge$ that is a hyperplane $S$ in $\Pro^5$ intersecting $\Ka$ transversely, so that it supports none of the above lines or points in $\Ka$. This means avoiding a finite number of linear constraints on the dual vector $U^T\in \Pro^{5*}$ defining $S$. Since $\F$ is algebraically closed, it is infinite, and such $S$ always exists, for $m,n$ are finite. The covector $U^T$ defining $S$ must (i) not lie in the Klein quadric in $\Pro^{5*},$ and (ii) be such that $U^T L_i\neq 0$ for at most $m^2+n^2 + mn$ Pl\"ucker vectors $L_i$.  There is a nonempty Zariski open set of such covectors in $\Pro^{5*}$.

To justify the second claim of the lemma we use the fact that there is one-to-one correspondence between so-called null polarities and regular line complexes. A null polarity is a projective transformation from $\Pro^3$ to its dual, given by a non-degenerate $4\times 4$ skew-symmetric matrix.  The six above-diagonal entries of the matrix are in one-to-one correspondence with the covector defining the regular line complex. The fact that the skew-symmetric matrix is non-degenerate is precisely that the covector not lie in the Klein quadric. See \cite{PW}, Chapter 3 for general theory of line complexes.

Hence the following procedure is equivalent to the above-described one of choosing the transverse hyperplane $S$ defining $\Ge$. Fix $S$ and find a null polarity, whose application  to the original arrangement of planes and points in $\Pro^3$ yields a new point-plane arrangement as follows. The roles of points and planes get reversed, and we now have the set of $m$ planes $\Pi'$ and the set of $n$ points $P'$, with the same number of incidences $|I(P,\Pi)|$. Take a dual arrangement so points become again points and planes are planes. However, no two lines of the same type, arising in $\Ge\subset S$ after the procedure described in the beginning of this section applied to the arrangement  $(P',\Pi')$, will intersect. 

The last claim of Lemma \ref{tog} follows from Lemma \ref{lem}.
\end{proof}

Fixing the transverse hyperplane $S$ may be interesting for applications, when the affine part of the quadric $\Ge$ becomes, say the Lie group ${SL}_2(\F)$, with its standard embedding in $\F^4$. Suppose there are $n$ lines supported in a fixed $\Ge$. Each line in $\Ge$ is a line in $\Ka$ and therefore corresponds to a unique plane pencil of lines in the ``physical space" $\Pro^3$, that is a unique pair $\alpha$ and $\beta$-plane intersecting along this line. I.e., there is a unique pair $(q,\pi(q))$, where the point $q$ lies in the plane $\pi(q)$. (Conversely,  $\Ge$ viewed as a null polarity is defined by the linear skew-symmetric linear map $q\to\pi(q),$ see \cite{PW}, Chapter 3.) Hence, given a family of $n$ lines in $\Ge$, the problem of counting their pair-wise intersections  can be expressed as counting the number of incidences in $I(P,\Pi)$, where $P=\{q\}$ and $\Pi=\{\pi(q)\}$. Moreover, $|P|,|\Pi|=n$, for two different planes of the same type will never intersect $\Ge$ along the same line (that is a null polarity is an isomorphism). Besides, if $k$ was the maximum number of lines in the intersection of $\Ge\subset \Pro^4$ with a $\Pro^3$, then the same $k$ stands for the maximum number of collinear points or planes, by Lemma \ref{lem}.

We have established the following statement.

\begin{lemma}\label{convert} Suppose, $\F$ is algebraically closed and not of characteristic $2$. Let $\mathcal L$ be a family of $n$ lines in $\Ge.$ Then there is an arrangement $(P,\Pi)$ of $n$ points and $n$ planes in $\Pro^3$, such that the number of pair-wise intersections of lines in  $\mathcal L$ equals $|I(P,\Pi)|-n$. Moreover, there are two disjoint families of $n$ new lines in $\Ge$ each, such that lines within each family are mutually skew, and the total number of incidences is $|I(P,\Pi)|-n$.  \end{lemma}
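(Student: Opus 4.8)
The plan is to read off, from each line of $\mathcal L$, the plane pencil it encodes, extract the associated point-plane pair, and then translate pairwise intersections of $\mathcal L$ into incidences of the resulting arrangement. First I would use that every line $\ell\subset\Ge$ lies in $\Ka$, so by the $\alpha/\beta$-plane discussion preceding Lemma \ref{lem} it is the Klein image of a plane pencil, determined by a vertex $q_\ell\in\Pro^3$ and a plane $\pi_\ell$ with $q_\ell\in\pi_\ell$. Since $\ell\subset\Ge$, these two are coupled by the null polarity $\phi$ attached to $\Ge$, i.e.\ $\pi_\ell=\phi(q_\ell)$, with $q_\ell\in\pi_\ell$ holding automatically. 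I set $P:=\{q_\ell:\ell\in\mathcal L\}$ and $\Pi:=\{\pi_\ell:\ell\in\mathcal L\}$. As the pencil, and hence $\ell$ itself, is recovered uniquely from the pair $(q_\ell,\pi_\ell)$, and as $\phi$ is an isomorphism $\Pro^3\to\Pro^{3*}$ (its defining skew-symmetric matrix being non-degenerate), distinct lines force distinct vertices and distinct planes; thus $|P|=|\Pi|=n$.

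Next I would set up the dictionary between intersections in $\Ge$ and incidences in $(P,\Pi)$. Two distinct pencil lines $\ell,\ell'$ meet in $\Ka$ precisely when their pencils share a member line $L$; for distinct vertices this forces $L=\overline{q_\ell q_{\ell'}}$ with $L\subseteq\pi_\ell\cap\pi_{\ell'}$, i.e.\ $q_{\ell'}\in\pi_\ell$ and $q_\ell\in\pi_{\ell'}$. The skew-symmetry of $\phi$ renders these two conditions equivalent, since $q'\in\phi(q)\iff q\in\phi(q')$, so every pairwise intersection of $\mathcal L$ is registered by the off-diagonal incidences of $(P,\Pi)$. The $n$ incidences $q_\ell\in\pi_\ell$ are present for each $\ell$ and correspond to no pairwise intersection of \emph{distinct} lines; discarding these trivial diagonal incidences is exactly the subtraction of $n$, so that the pairwise intersections of $\mathcal L$ are accounted for by $|I(P,\Pi)|-n$. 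This yields the first assertion.

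For the \emph{moreover} clause I would feed the arrangement $(P,\Pi)$ into Lemma \ref{tog}. That lemma supplies two families $L_\alpha$ (one line per point of $P$) and $L_\beta$ (one line per plane of $\Pi$) inside a three-quadric $\Ge$, with no two lines of the same family meeting and $|I(L_\alpha,L_\beta)|=|I(P,\Pi)|$; because $|P|=|\Pi|=n$, each family consists of $n$ lines and, by the genericity of the separating hyperplane in Lemma \ref{tog}, the two families are disjoint and internally skew. I would then identify, among these cross-intersections, the $n$ that arise from the automatic incidences $q_\ell\in\pi_\ell$ and excise exactly those, leaving $|I(P,\Pi)|-n$ genuine crossings, matching the pairwise intersections of $\mathcal L$.

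The hard part will be the bookkeeping around these trivial incidences: one must check that the diagonal incidences $q_\ell\in\pi_\ell$ are \emph{precisely} the $n$ incidences carrying no information about pairwise meetings of distinct lines, and that the construction of Lemma \ref{tog} permits removing exactly these $n$ cross-intersections without disturbing the remaining ones (here one must also be careful about the precise multiplicity with which each meeting pair is recorded). Everything else — the pencil-to-vertex-plane correspondence, the injectivity from non-degeneracy of $\phi$, and the symmetry $q'\in\phi(q)\iff q\in\phi(q')$ — is routine once the Klein-quadric formalism of Section \ref{setup} and Lemma \ref{lem} are in hand.
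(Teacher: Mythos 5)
Your proposal is correct and follows essentially the same route as the paper: the paper's own proof likewise reads off the pair $(q,\pi(q))$ from each pencil line via the null polarity attached to $\Ge$, gets $|P|=|\Pi|=n$ because that polarity is an isomorphism, identifies pairwise meetings of lines in $\mathcal L$ with the off-diagonal incidences (the $-n$ being exactly the diagonal incidences $q\in\pi(q)$), and obtains the two skew families by feeding $(P,\Pi)$ into Lemma \ref{tog}. The ``hard part'' you flag --- the multiplicity with which each meeting pair is recorded (by skew-symmetry of the polarity each unordered meeting pair produces two off-diagonal incidences) --- is glossed over by the paper as well, and is harmless since every application of the lemma is insensitive to this factor of $2$.
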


Note, the $-n$ comes from the fact that each $\pi(q)$ contains $q$.
Lemma \ref{convert} and Theorem \ref{mish} have the following corollary. This fact also follows from the results in \cite{EH} after a projection argument.  We present the proof along the lines of exposition in this section, for it  also gives an application of the formalism here.

\begin{corollary}\label{intersections} The union of any $n=\Omega(p^2)$ straight lines in $G={SL}_2(\F_p)$ has cardinality $\Omega(p^3)$, that is takes up a positive proportion of $G$.
\end{corollary}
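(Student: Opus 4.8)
The plan is to show that the given lines cannot have too many pairwise intersections, so that their union must be large; the intersection count will be controlled by Lemma \ref{convert} together with Theorem \ref{mish}. Throughout I may assume $p$ is large, since for $p=O(1)$ the claim is vacuous, and I may assume the lines are distinct. Because the union of a family of lines only grows as lines are added, I may also discard lines and assume $n=\Theta(p^2)$; in particular $n=O(p^2)$, so that Theorem \ref{mish} is applicable. Each line lies in the affine quadric $SL_2(\F_p)$, hence is a genuine affine line carrying exactly $p$ of the points of $G$.

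Write $U$ for the union of the lines and, for $x\in U$, let $r(x)$ be the number of the chosen lines through $x$. Counting incidences line-by-line gives $\sum_{x\in U}r(x)=np$, while, since two distinct lines meet in at most one point, $\sum_{x\in U}r(x)(r(x)-1)=2I$, where $I$ is the number of intersecting pairs among the lines. Hence $\sum_{x\in U}r(x)^2=np+2I$, and Cauchy--Schwarz gives
\begin{equation*}
(np)^2=\Big(\sum_{x\in U}r(x)\Big)^2\le |U|\sum_{x\in U}r(x)^2=|U|\,(np+2I).
\end{equation*}
So $|U|\ge (np)^2/(np+2I)$, and since the numerator is $\Theta(p^6)$ it suffices to prove $I=O(p^3)$, for then the denominator is $\Theta(p^3)$.

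To bound $I$, I regard the lines as lying in the three-quadric $\Ge$ whose affine part is $SL_2(\F_p)$. By Lemma \ref{convert} there is a point-plane arrangement $(P,\Pi)$ in $\Pro^3$ with $|P|=|\Pi|=n$ and $I=|I(P,\Pi)|-n$. Applying Theorem \ref{mish} with $m=n$ gives $|I(P,\Pi)|=O(n^{3/2}+kn)$, where $k$ is the maximum number of collinear planes of $\Pi$; by the correspondence recorded after Lemma \ref{convert}, this $k$ equals the maximum number of the chosen lines lying in a single section $\Ge\cap\Pro^3$. The hard part will be to show $k=O(p)$. Since $S$ is not tangent to $\Ka$, the quadric $\Ge$ contains no plane even over the algebraic closure, so any hyperplane section $\Ge\cap\Pro^3$ is a quadric surface whose defining form has rank at least three, i.e. a smooth quadric or a cone. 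Over $\F_p$ a smooth quadric surface carries at most $2(p+1)$ lines (its two reguli) and a cone carries exactly $p+1$ lines (one through the vertex for each point of the base conic); the cases of rank at most two, which would carry $\sim p^2$ lines, are excluded precisely because they contain a plane. Hence every section $\Ge\cap\Pro^3$ contains $O(p)$ lines, and therefore $k=O(p)$.

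Putting the pieces together, with $n=\Theta(p^2)$ and $k=O(p)$ one gets $I\le |I(P,\Pi)|=O(n^{3/2}+kn)=O(p^3)$. The Cauchy--Schwarz bound then gives $|U|\ge (np)^2/(np+2I)=\Omega(p^3)$, a positive proportion of $|G|=\Theta(p^3)$, as claimed. I expect the genuinely delicate point to be the line count $k=O(p)$, which is the only place the geometry of $\Ge$ enters beyond the black-box incidence estimate, and where the positive-characteristic counts of lines on quadric surfaces must be made precise.
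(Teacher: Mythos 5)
Your proof is correct, and it takes a genuinely different route from the paper's. The paper lower-bounds the union by inclusion--exclusion, $|U|\geq np-I$, which requires the number $I$ of intersecting pairs to be at most a definite fraction of $np$; since the constant in Theorem \ref{mish} is unspecified, this forces the paper to aim for $k\le p/2$, which cannot be guaranteed outright. The paper therefore argues by dichotomy: discard all lines lying in ``rich'' hyperplane sections of $G$ (those with more than $p/2$ lines); either a positive proportion of lines survives and \eqref{pups} applies with $k\le p/2$, or there are $\Omega(p)$ rich quadric surfaces, whose line-unions are then counted directly (inclusion--exclusion within each surface, together with the fact that two distinct sections share at most two lines). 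You bypass the dichotomy by two substitutions: first, the second-moment/Cauchy--Schwarz bound $|U|\ge (np)^2/(np+2I)$, which is insensitive to the implied constant, so $I=O(p^3)$ with any constant suffices; second, the uniform bound $k=O(p)$, since every hyperplane section of $\Ge$ is a quadric surface of rank at least three (it can contain no plane, as $\Ge$ contains none), hence a cone or a smooth quadric, carrying at most $2(p+1)$ lines over $\F_p$. That last fact, which you rightly flag as the crux, is sound; it is worth noting that the paper's second branch appears to need it implicitly as well, in order to conclude that $\Omega(p^2)$ discarded lines give rise to $\Omega(p)$ \emph{distinct} rich surfaces. One point you should make explicit: the hyperplanes in the definition of $k$ a priori range over the algebraic closure, while your line count is over $\F_p$. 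This is harmless, because as soon as two distinct $\F_p$-rational planes of $\Pi$ contain a common line, that line (hence the corresponding tangent hyperplane and section) is $\F_p$-rational; alternatively, a non-rational hyperplane meets its Frobenius conjugate in a plane, and the intersection of $\Ge$ with a plane is a conic containing at most two lines. In sum, your route is shorter and constant-robust, at the price of the explicit classification of quadric sections; the paper's route avoids that classification but pays with a case split and a separate direct-counting argument.
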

\begin{proof} The statement is trivial for small $p$, so let $p>2$. View lines in $G\subset \F_p^4$ as lines in $\Ge\subset \Pro^4$ over the algebraic closure of $\F_p$. Pass to a point-plane incidence problem in $\Pro^3$ using Lemma \ref{convert} and then by Lemma \ref{tog} back to a line-line incidence problem in  $\Ge.$  We may change $n$ to $cn$ to make Theorem \ref{mish} applicable. The value of the absolute $c$ may be further decreased to justify subsequent steps. By the inclusion-exclusion principle one needs to show that the number of pair-wise intersections of lines is at most a fraction of $pn$. This would follow if one could apply the incidence bound \eqref{pups} with $m=n$ and, say $k=\frac{p}{2}$. 

By Lemma \ref{tog} the quantity $k$ is the maximum number of  ``new lines'' in the intersection of $\Ge$ with a projective three-hyperplane. Observe that there are more than $\frac{p}{2}$ of new lines in the intersection of $\Ge$ with a hyperplane if and only if there was the same number of  ``old lines'' in the intersection of $G$ with an affine hyperplane in $\F_p^4$.  

Let us throw away from the initial set of lines in $G$ those lines, contained in intersections of $G\subset \mathbb F_p^4$ with affine three-planes $H$, with $H\cap G$ having more than $\frac{p}{2}$ lines. Either we have a positive proportion of lines left, and no more  rich hyperplanes $H$, or we have had $\Omega(p)$ quadric surfaces  $H\cap G$ in $G$, with at least $\frac{p}{2}$ lines in each. In the former case, if $c$ is small enough, we are done by (\ref{pups}). In  the latter case, by the inclusion-exclusion principle applied within each surface, the union of lines contained therein takes up a positive proportion of each $H\cap G$, i.e., has cardinality $\Omega(p^2)$. Since $H\cap H'\cap G$, $H\neq H'$ is at most two lines, by the inclusion-exclusion principle, the union of $\Omega(p)$ of them has cardinality $\Omega(p^3)$. \end{proof}

\section{Proof of Theorem \ref{mish}}
We use Lemma \ref{tog}  to pass to the incidence problem  between two disjoint line families $L_\alpha,L_\beta$ lying in $\Ge$, now using $m= |L_\alpha|$, $n= |L_\beta|$. Lines within each family are mutually skew.

All we need on the technical side is to consider the case $m\geq n$ and adapt the strategy of the proof of Theorem \ref{gkt} to the three-quadric $\Ge$ instead of $\Pro^3$. The latter is done via a generic projection argument, and  the rest of the proof follows the outline in the opening sections. We skip some easy intermediate estimates throughout the proof, since they have been worked out accurately up to constants in \cite{Ko}, Sections 3,4.

The key issue is that any finite line arrangement over an infinite field in higher dimension can be projected into three dimensions with the same number of incidences; this fact is also stated in \cite{Ko}. Our lines lie  in  $\Pro^4$, containing the quadric $\Ge$. A pair of skew lines defines a three-hyperplane $H_i$ in $\Pro^4$. This hyperplane is projected one-to-one onto a fixed three-hyperplane $H$ if and only if the projective vector $u\in\Pro^4$ defining $H$ does not lie in $H_i$. Since we are dealing with a finite number of pairs of skew lines and $\F$ is infinite, the set of  $u$, such that the projection of the line arrangement on the corresponding three-hyperplane $H$ acts one-to-one on the set of incidences is non-empty and Zariski open.

\begin{theorem}\label{gkt2}
 Let $L_\alpha,L_\beta$ be two disjoint sets of respectively $m,n$ lines contained in the quadric $\Ge=\Ka\cap S$, where the hyperplane $S$ is not tangent to the Klein quadric $\Ka$.  Suppose, lines within each family are mutually skew. Assume that $m\geq n$, $\F$ is algebraically closed, with characteristic $p\neq 2$. Let $n\leq cp^2,$ for some absolute $c$.

Then
\begin{equation}\label{ibou}
|I(L_\alpha,L_\beta)|=O\left( m\sqrt{n}+ km\right),
\end{equation}
where $k$ is the maximum number of lines in $L_\beta$, contained in the intersection of $\Ge\subset \Pro^4$ with a subspace $\Pro^3$ in $\Pro^4$.
\end{theorem}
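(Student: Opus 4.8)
The plan is to transplant the Guth--Katz polynomial argument, in the streamlined form of \cite{Ko}, to the three-quadric $\Ge$, using the two-family skew structure to produce the collinearity term $km$; I will describe the scaffolding and defer the routine quantitative estimates to \cite{Ko}, Sections 3,4. First I would remove the imbalance $m\geq n$ by the cheap partition advertised in the introduction: split $L_\alpha$ into $\lceil m/n\rceil$ subfamilies of at most $n$ lines, apply the balanced bound $O(n^{3/2}+kn)$ to each subfamily paired with $L_\beta$, and sum. Each subfamily inherits mutual skewness and the same $L_\beta$, hence the same $k$, is reused, so the sum reproduces $O(m\sqrt n+km)$. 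It therefore suffices to treat the balanced case $m=n$. As indicated just before the statement, I would then fix a generic centre of projection lying off each of the finitely many $3$-planes spanned by pairs of skew lines, giving a map $\Pro^4\dashrightarrow\Pro^3$ that is injective on the incidence set and preserves skewness; because $\F$ is infinite such a centre exists. We thus work in $\Pro^3$ with $\sim n$ mutually-skew $\alpha$-lines and $\sim n$ mutually-skew $\beta$-lines, so that through each point passes at most one line of each type, each incidence being a single transverse crossing of an $\alpha$-line with a $\beta$-line.

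For the balanced case I would argue by strong induction on $n$, assuming $|I(L_\alpha,L_\beta)|\geq C(n^{3/2}+kn)$ for contradiction. Deleting the lines incident to fewer than $\sim\sqrt n$ crossings accounts for only $O(n^{3/2})$ incidences; after this pruning the induction hypothesis lets me assume that $\Omega(n)$ rich lines remain. By the usual interpolation step these rich lines lie on the zero set $Z$ of a polynomial of degree $d\sim\sqrt n$, since a line meeting $Z$ in more than $d$ points lies on $Z$; moreover every incidence in which one of the two lines fails to lie on the component carrying the other is controlled by B\'ezout --- such a line meets that component in at most $d_i$ points --- and these total $O(n\sum_i d_i)=O(n^{3/2})$. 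The decisive point is that $d\sim\sqrt n\leq\sqrt c\,p<p$ once $c$ is small, so Salmon's Theorem \ref{Salmon} applies to each irreducible component: a component of degree $d_i\geq 3$ carrying more than $d_i(11d_i-24)$ of our lines must be ruled, and the lines on the non-ruled components, being a strictly smaller sub-configuration once the dominant ruled part is stripped away, are handled by the induction hypothesis.

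It remains to bound the incidences between two rich lines lying on a common ruled component, and here the two-family structure does the work. A plane component contains at most one line of each type, since two coplanar lines of the same type would meet; a cone contains only $O(1)$ of our lines, since its generators are concurrent at the apex; so planes and cones contribute negligibly. On an irreducible non-cone ruled component of degree $d_i\geq 3$, Lemma \ref{rss} forces each generator to meet at most $d_i-2$ others, whence these components together carry $O\bigl(\sum_i(\text{lines})_i\,d_i\bigr)=O(\sqrt n\cdot n)=O(n^{3/2})$ crossings. The only remaining possibility is a smooth, doubly-ruled quadric component: since same-type lines are skew, all $\alpha$-lines on it lie in one ruling and all $\beta$-lines in the reciprocal ruling, so with $a_i$ and $b_i$ lines of each type it supports exactly $a_ib_i$ crossings. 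By Lemma \ref{lem} such a quadric is a trace $\Ge\cap\Pro^3$, in which at most $k$ lines of $L_\beta$ can lie, so $b_i\leq k$ and the quadric components together contribute $\sum_i a_ib_i\leq k\sum_i a_i\leq km$.

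Summing the four contributions yields $O(n^{3/2}+kn)$, contradicting the assumed lower bound once $C$ is large and closing the induction. I expect the main obstacle to be exactly this final synthesis: organising the off-surface, cross-component, and within-component incidences so that the B\'ezout bound, the generator bound of Lemma \ref{rss}, and the quadric-to-$k$ correspondence of Lemma \ref{lem} all combine cleanly to beat $C(n^{3/2}+kn)$, and in particular making the induction on the non-ruled part close without losing the precise shape of the bound. This is where I would lean most heavily on the accurate constants of \cite{Ko}, Sections 3,4.
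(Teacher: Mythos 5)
Your scaffolding does follow the paper's route (contradiction hypothesis, interpolation, Salmon, component-by-component analysis, and the quadric-to-hyperplane step that produces $k$), but it omits the one quantitative device that makes the Salmon step deliver anything: the random-sampling degree reduction. You interpolate a polynomial through the lines and take its degree to be $d\sim\sqrt n$. With that degree Salmon's Theorem \ref{Salmon} is vacuous for this argument: a non-ruled irreducible component of degree $d_i$ may legitimately contain up to $d_i(11d_i-24)$ lines, and summing over components gives only $\sum_i 11d_i^2\leq 11\bigl(\sum_i d_i\bigr)^2\leq 11d^2$, which for $d\sim\sqrt n$ (in fact $d\approx\sqrt{6n}$ from the interpolation count) \emph{exceeds} $n$. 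So all of your lines could lie on non-ruled components; there is then no ``dominant ruled part'' to strip away, the residual configuration is not strictly smaller in any useful sense, and the induction hypothesis applied to it returns a bound of the same order $C(n^{3/2}+kn)$ as the quantity you are trying to contradict --- the factor $C$ rides along with the induction, so no contradiction ever materialises. The paper's proof avoids this by deleting each line of $L_\beta$ independently with probability $1-\rho$, $\rho=O(C^{-2})$: the surviving $\sim\rho n$ lines admit a vanishing polynomial of degree $d=O(\sqrt{\rho n})=O(\sqrt n/C)$, while every popular $\alpha$-line still meets $\Omega(C\rho\sqrt n)>d$ surviving lines and hence lies in $Z$. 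Only then is the number of $\beta$-lines on non-ruled factors at most $11d^2=O(n/C^2)$, a negligible fraction of $n$, so that the incidences they carry are $O(m\sqrt n)$ \emph{without} the factor $C$ and the induction closes. This sampling step is not one of the ``routine quantitative estimates'' one can defer to \cite{Ko}; it is the engine of the whole proof.

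Two smaller points. First, with interpolation degree $d\approx\sqrt{6n}$, lines that are rich at your threshold of $\sim\sqrt n$ crossings need not lie on $Z$ at all, since containment requires the number of crossings to exceed the degree; the paper's richness threshold is $\tfrac14C\sqrt n$, inherited from the contradiction hypothesis, which is another reason the constant $C$ has to be threaded explicitly through the construction rather than suppressed into $\sim$. Second, your inequality $b_i\leq k$ on a doubly-ruled quadric component is not immediate from Lemma \ref{lem}: the quadric lives in the projected $\Pro^3$, not in $\Ge$, and one must lift it back to $\Ge$ and use \emph{two} skew $\alpha$-lines in one ruling (so $a_i\geq2$; the case $a_i\leq1$ needs a separate, easy count) to span a three-hyperplane $H\subset\Pro^4$ that necessarily contains every $\beta$-line crossing both; only then does the definition of $k$, applied to $\Ge\cap H$, give $b_i\leq k$. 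These two items are repairable along the paper's lines, but the missing degree reduction is a genuine gap.
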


\begin{proof}
Following Guth and Katz, it is technically very convenient to use induction in $\min(m,n)$ and a probabilistic argument. The estimate $I=O( m\sqrt{n} )$ is true for all sufficiently small $m,n$, given  a sufficiently large $O(1)$ value $C$ of the constant in the $O$-symbol, which we fix. We do not specify how large $C$ should be, however Koll\'ar evaluates it explicitly, see \cite{Ko}. For the induction assumption to work throughout let us reset $n = \min(|L_\alpha|,|L_\beta|)$ and  $m = \max(|L_\alpha|,|L_\beta|)$.  The induction assumption will be used throughout the proof as the bound for incidences between sub-families of $(m',n')$ lines, with $n'$ sufficiently less than $n$, no matter what $m'$ is. This will enable us  to exclude from consideration the incidences that some  undesirable subsets of lines in $L_\beta$ account for, as long as they constitute a reasonably small fraction of   $L_\beta$ itself. 

Suppose, we have the smallest value of $n$, such that for some $m\geq n$  the  main  term in the right-hand side of \eqref{ibou}  fails to do the job, that is 
\begin{equation}
|I(L_\alpha,L_\beta)|  =  Cm\sqrt{n},\label{contr}\end{equation}
for some large enough constant $C$.  We will show that this assumption implies the bound $I=O(km)$, independent of $C$, which will therefore finish the proof. 

Note that since the right-hand side of the assumption \eqref{contr} is linear in $m$, it implies, by the pigeonhole principle,  that there is a subset $\tilde L_\alpha$ of $L_\alpha$ of some $\tilde m\leq m$ lines, with $\tilde m=O(n)$, such that 
$$|I(\tilde L_\alpha, L_\beta)| \geq C\tilde m\sqrt{n}.$$
We reset the notations $\tilde L_\alpha$ to $L_\alpha$ and $\tilde m$ back to $m$, but now $m=O(n)$, which is necessary for the next step.

A large proportion of incidences must be supported on  lines in $L_\alpha$, which are intersected not much less than average, say by at least $\frac{1}{4} C  \sqrt{n}$ lines from $L_\beta$ each. Let us call this popular set $L'_\alpha$. We now delete lines from $L_\beta$ randomly and independently, with probability $1-\rho$ to be chosen.  Let the random surviving subset  of $L_\beta$ be denoted as $\tilde L_\beta$. By the law of large numbers, the probability that an individual line in $L'_\alpha$ is met by  lines from  $\tilde L_\beta$ less than half the expected number of times is exponentially small in $n$, and so is $m$ times this probability, since now $m=O(n)$. Thus  there is a realisation of $\tilde L_\beta \subset L_\beta$, of size close to the expected one,
i.e., between $\frac{1}{2}\rho n$ and $2\rho n$ such that every line in $L'_\alpha$ meets at least, say \begin{equation}\label{avrg}\frac{1}{8} C \rho \sqrt{n}\end{equation} lines in $\tilde L_\beta$.

Our lines live in  $\Ge\subset\Pro^4$, with homogeneous coordinates $(x_0:\ldots:x_4)$.  By the projection argument, preceding the formulation of Theorem \ref{gkt2}, the coordinates can be chosen in such a way that lines in the union of the two families project one-to-one as lines in the $(x_1:\ldots:x_4)$-space, and skew lines remains skew.

Let  $Q$ be a nonzero homogeneous polynomial in $(x_1:\ldots:x_4)$ that vanishes on the projections of the lines in $\tilde L_\beta$ to the $(x_1:\ldots:x_4)$-space, so it will also vanish on the lines in $\tilde L_\beta$. 
The degree $d$ of $Q$ can be taken as $O\left((\rho n)^{\frac{1}{2}}\right)$.  This fact is well known, see e.g. the survey \cite{D1}. For completeness, we give a quick argument. Choose $t$ points on each of the projected lines from $\tilde L_\beta$, with or without repetitions. Let $X\subset\Pro^3$ be the corresponding set of at most $t|\tilde L_\beta|$ points. There is a nonzero homogeneous polynomial of degree $d= O[ (t|\tilde L_\beta|)^{1/3} ]$ vanishing on $X$. More precisely, it suffices to satisfy the inequality $\left(\begin{array}{c} d+3\\3\end{array}\right)>|X|$ for the degree of the polynomial. The left-hand side of the latter inequality is the dimension of the vector space of degree $d$ homogeneous polynomials in four variables; if it is  bigger than $|X|$, the evaluation map on $X$ has nontrivial kernel, by the rank-nullity theorem.

By construction of the point set $X$, the polynomial $Q$ has $t$ zeroes on each line from $\tilde L_\beta$, so in order to have it vanish identically on the union of these lines one must merely ensure that $t>d$. Hence, the above claim for $d$.

We choose the parameter $\rho$, so that the degree $d$ of $Q$ is smaller than the number of its zeroes on each line in $L_\alpha'$, which is at least \eqref{avrg}. I.e.,
$$\rho =O\left(\frac{1}{C^2}\right)<1,$$
 and thus
\begin{equation}\label{d1}d=O(\sqrt{\rho n}) =O\left( \frac{\sqrt{n}}{C}\right).
\end{equation}

Reduce $Q$ to the minimal product of irreducible factors. Denote $\bar Z$ the zero set of the polynomial $Q$ in $\Pro^3$ defined by the $(x_1:\ldots:x_4)$ variables and $\bar L'_\alpha,  \bar L'_\beta$ the projections of the corresponding line families.  Let also $Z$ denote the zero set of the polynomial $Q$ in $\Ge\subset\Pro^4$.
Recall that the projection has been chosen so that $|I(\bar L'_\alpha, \bar L'_\beta)|=|I(L'_\alpha, L'_\beta)|$ and lines in the same family still do not meet. In the sequel, when we speak of zero sets of factors of $Q$, we mean point sets in $\Pro^3$, in the $(x_1:\ldots:x_4)$ variables.

 It follows that all the lines in $\bar L'_\alpha$ are  contained in $\bar Z$, for each supports more zeroes of $Q$ than the degree $d$. For all lines from $\bar L_\beta$ that do not live in $\bar Z$, every such line will intersect $\bar Z$ at most $d$ times. The number of incidences these lines can create altogether is thus
\begin{equation}O\left( C^{-1} n^{\frac{3}{2}} \right) = O\left( C^{-1} m\sqrt{n}\right),\label{transverse}\end{equation}
which is too small in comparison with the supposedly large total number of incidences \eqref{contr}. Therefore, we may assume that, say at least $\frac{1}{2} C m\sqrt{n}$ incidences are supported on lines in $\bar L'_\alpha$ and those lines from $\bar L_\beta$ that are also contained in $\bar Z$. Suppose, the number of the latter lines is less than, say  $\frac{n}{16}$. This will contradict the induction assumption -- no matter how many lines $m'$ are there in $\bar L'_\alpha$. If $m'\geq n$, then the number of incidences, by the induction assumption, must be at most $Cm'\sqrt{n}/4$; if $m'<\frac{n}{16}$, it is at most $C n \sqrt{m'}/16<C m \sqrt{n}/16.$
Hence, there are at least $\frac{n}{16}$ lines from $\bar L_\beta$ in $\bar Z$, and we call the set of these lines $\bar L'_\beta$. To avoid taking further fractions of $n$, let us proceed assuming that $|\bar L'_\beta|=n$.

We can repeat the transverse intersection incidence counting argument for the zero set of each irreducible factor of $Q$. Suppose, the factor has degree $d'$. Then the number of incidences of lines in the zero set $\bar Z'$ of the factor with those not contained in $\bar Z'$ is at most $d' (m+n)$. Summing over the factors, we can use the right-hand side of  \eqref{transverse} as the estimate for the total over all the irreducible  factors of $Q$ number of transverse incidences. We therefore proceed assuming that there are $\Omega( C m\sqrt{n})$ of pairs of intersecting lines from the two families, each incidence occurring inside the zero set of some irreducible factor of $Q$.

Invoking Salmon's Theorem \ref{Salmon} we deduce that if  $n>11d^2-24d$, and given that $d<p$ if the characteristic $p>0$, the zero set $\bar Z$ of the polynomial $Q$ must have a ruled factor.  The latter inequality entails that almost 100\% of  lines in the $\beta$-family must lie in ruled factors. Indeed, we have $|\bar L'_\beta|=n$ lines in $Z$, and at most 
$11d^2=O(n/C^2)$ may lie in the union of non-ruled factors, provided that $d=O(\sqrt{n}/C)< p$, that is the constraint in Theorem \ref{Salmon} has been satisfied. Thus, we may not bother about what happens in non-ruled factors of $\bar Z$ by the induction assumption and proceed, having redefined $n$ slightly one more time, so that now $n$ lines from $\bar L'_\beta$ lie in ruled factors of $\bar Z$. They still have to account for $\Omega(Cm\sqrt{n})$ incidences with the lines from $\bar L'_\alpha$, for all the lines in $\bar L'_\beta$ that have been disregarded so far could only account for a small percentage of the total number of incidences.

A single ruled factor cannot be a cone, for no more than two of our lines meet at a point. However, a ruled factor of degree $d'>2$, which is not a cone, can contribute, by Lemma \ref{rss}, at most $n(d'-2)+2n+(m+n)d'$ incidences. The latter three summands come, respectively, from mutual intersections of generators, intersections of generators with special lines -- see the discussion from Lemma \ref{rss} through the end of Section \ref{ruled}  -- and intersections of lines within the factor with lines outside the factor. 

Once again, summing over irreducible ruled factors with $d'>2$, we arrive in the right-hand side term in \eqref{transverse} again -- this is too small in comparison with \eqref{contr}. Hence $Q$ must contain one or more irreducible factors $Q'$ of degree at most $2$, that is the zero set of each such $Q'$ is an irreducible doubly-ruled quadric or a plane in $\Pro^3$. If the union of these low degree factors contains only a small proportion of the lines from $\bar L'_\beta$, we once again invoke the induction assumption and contradict \eqref{contr}.

Let us reset $n$ to its original value.  The argument up to now has calmed that if \eqref{contr} is true,  we have at least $cn$ lines from $\bar L'_\beta$ lying in the union of the zero sets of low degree -- meaning degree at most two -- factors of $Q$, creating at least $cCm\sqrt{n}$ incidences with lines from $\bar L'_\alpha$ inside these factors. By the pigeonhole principle, there is a low degree factor  $Q'$, whose zero set contains at least $c\frac{n}{d}=\Omega(C\sqrt{n})$ lines from $\bar L'_\beta$.  Moreover, we can disregard whatever happens inside the union of low degree factors, each containing fewer than some $cC\sqrt{n}$ lines from  $\bar L'_\beta$, by the induction assumption.

The contribution of  plane factors of $Q$ is negligible, for each plane in $\Pro^3$ may contain only one line from each (projected) family.
Thus there is a rich degree $2$ irreducible factor $Q'$,  which defines a doubly ruled quadric surface $\bar Z'$ in  the $(x_1:\ldots:x_4)$ variables. $\bar Z'$ supports at least two lines from $\bar L'_\alpha$ in one ruling, for otherwise the total number of incidences within all such rich quadrics would be $O(C^{-1}n^{\frac{3}{2}})$. These two lines are crossed by all lines in the second ruling, that is by $\Omega(C\sqrt{n})$ lines from the family $\bar L'_\beta.$

It remains to bring the parameter $k$ in, the maximum number of lines from $L_\beta$, per intersection of $\Ge\subset \Pro^4$ with a three-hyperplane.
Let $Z'=\Ge\cap (\bar Z'\times \Pro^1)$, that is the intersection 
of the quadric $\Ge$ with the quadric, which is the zero set of $Q'$ in $\Pro^4$. Lifting lines from $\bar Z'$ to $Z'$ preserves incidences, so we arrive at the following figure in $Z'\subset \Ge$: a pair of skew lines from $L_\alpha$ crossed by $\Omega(C\sqrt{n})$  lines from $L_\beta$. The  two  lines from $L_\alpha$ determine a three-hyperplane $H$, which also contains all the $\Omega(C\sqrt{n})$ lines in question from $L_\beta$. 

By the assumption of the theorem, $H$ may contain at most $k$ lines from $L_\beta$. This means $C=O\left(\frac{k}{\sqrt{n}}\right)$. Substituting this into \eqref{contr} yields the inequality $ |I(L_\alpha,L_\beta)|=O(km).$
This completes the proof of Theorem \ref{gkt2}.
\end{proof}

Theorem \ref{gkt2} together with the preceding it discussion in Sections \ref{setup} and \ref{geom} and its outcomes stated as Lemma \ref{lem} and \ref{tog}, result straight into the claim of our main Theorem \ref{mish}.

\section{Applications of Theorem \ref{mish}}
This section has three main parts. First, we develop an application of Theorem \ref{mish} to the problem of counting vector products defined by a plane point set, extending to positive characteristic the estimates obtained over $\R$ via the Szemer\'edi-Trotter theorem. Then we use that application in a specific example  to show that in a certain parameter regime Theorem \ref{mish} is tight.  Finally, we use Theorem \ref{mish} to consider a pinned version of the Erd\H os distance problem on the number of distinct distances determined by a set of $N$ points in $\F^3$, where we also get a new bound in positive characteristic, which is not too far off the best known bound over the reals. 

Before we do this, we state a slightly stronger version of Theorem \ref{mish}, which is  more tuned for applications. The need for it comes from the fact that sometimes, when questions of geometric and arithmetic combinatorics are reformulated as incidence problems,  there are certain geometrically identifiable subsets of the incidence set that should be excluded from the count, for they correspond to some in some sense ``pathological'' scenario. We encountered this in \cite{RR}, where the Guth-Katz approach to the the  Erd\H os distance problem was applied to Minkowski  distances in the real plane. In order to get the lower bound for the number of distinct Minkowski  distances, one claims an upper bound on the number of pairs of congruent, that is equal Minkowski length line segments with endpoints in the given plane point set. However, it is easy to construct an example where the number of pairs of zero Minkowski length segments is forbiddingly large. Hence, the analysis in \cite{RR} considered only nonzero Minkowski distances, and had to elucidate how this fact gets reflected in the corresponding incidence problem for lines in three dimensions. Discounting pairs of line segments of zero Minkowski length was equivalent to discounting pair-wise line intersections within a set of specific two planes in $3D$; these planes could violate the assumption of Theorem \ref{gkt} about the maximum number of coplanar lines.

Such a restricted  application of the Guth-Katz approach was further generalised in \cite{RS}, where more $2D$ combinatorial problems have been identified, where the tandem of incidence Theorems 2.10 and 2.11 from \cite{GK} worked ``as a hammer'', if used in the restricted form, that is discounting pairwise line intersections within certain ``bad'' planes, as well as at certain ``bad'' points.   

Technically, it is Theorem 2.11 from \cite{GK}, whose restricted version required most of the work in \cite{RR}; adapting Theorem 2.10 took only a few lines of argument, and this is all that is essentially needed here regarding Theorem \ref{mish}, where we wish to discount point-plane incidences supported on a certain set of  forbidden  lines in $\Pro^3$.

 Suppose, we have a finite set of lines $L^*$ in $\Pro^3$. Define the restricted set of incidences between a point set $P$ and set of planes $\Pi$ as
\begin{equation}\label{inss}
I^*(P,\Pi) = \{(q,\pi)\in P\times \Pi: q\in \pi \mbox{ and } \forall l\in L^*, \,q\not \in l \mbox{ or } l \not\subset\pi\}.
\end{equation}

\addtocounter{theorem}{-10}

\renewcommand{\thetheorem}{\arabic{theorem}*}
\begin{theorem} \label{mishh} Let $P, \Pi$ be sets of points and planes in  $\Pro^3$, of cardinalities respectively $m,n$, with $m\geq n$. If $\F$ has positive characteristic $p$, then $p\neq 2$ and $n=O(p^2)$. For a finite set of lines $L^*$, let  $k^*$ be the maximum number of  planes, incident to any line not in $L^*$. 

Then
\begin{equation}\label{pupss}
|I^*(P,\Pi)|=O\left( m\sqrt{n}+ k^*m\right).\end{equation}
\end{theorem}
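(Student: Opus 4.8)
The plan is to reduce Theorem \ref{mishh} to Theorem \ref{mish} (equivalently, to the line-counting Theorem \ref{gkt2}) by tracking how the forbidden-line set $L^*$ manifests in the phase-space picture of Section \ref{geom}. First I would pass, via Lemma \ref{tog}, from the point-plane arrangement $(P,\Pi)$ to the two disjoint line families $L_\alpha, L_\beta$ in the three-quadric $\Ge = \Ka \cap S$, where $|L_\alpha|=m$, $|L_\beta|=n$, lines within each family are mutually skew, and ordinary incidences $|I(P,\Pi)|$ correspond to pairwise crossings $|I(L_\alpha,L_\beta)|$. The key observation is that a pair $(q,\pi)$ being \emph{excluded} from $I^*$ — because $q\in l$ and $l\subset\pi$ for some $l\in L^*$ — is exactly the statement that the crossing point of the corresponding $\alpha$-line and $\beta$-line in $\Ge$ lies on the line $L\in\Ka$ which is the Klein image of $l$. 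So each forbidden physical line $l$ collapses to a single ambient line $L$ in $\Ge$, and the restricted incidence count $|I^*(P,\Pi)|$ equals the number of $L_\alpha$–$L_\beta$ crossings that do \emph{not} occur on any of the finitely many lines $\{L : l\in L^*\}$.

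The heart of the matter is re-running the proof of Theorem \ref{gkt2} with this exclusion in force, and checking that $k^*$ — the maximum number of planes incident to any line \emph{not} in $L^*$ — is the correct collinearity parameter. Recall the proof of Theorem \ref{gkt2} produces, under the contradiction hypothesis \eqref{contr}, a rich doubly-ruled quadric factor $\bar Z'$ carrying two skew lines of $L_\alpha$ crossed by $\Omega(C\sqrt n)$ lines of $L_\beta$; lifted to $Z'\subset\Ge$, all these lines lie in a common three-hyperplane $H$, and the $\Omega(C\sqrt n)$ crossing points all sit on a single ambient line $\ell_H$ of $\Ge$ (the unique line in $\Ge$ met by the two $L_\alpha$-generators, which is where the reciprocal ruling concentrates). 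By Lemma \ref{lem} that single line $\ell_H$ is the Klein image of one physical line $l_H$ in $\Pro^3$, and the $\Omega(C\sqrt n)$ crossings on it are precisely point-plane incidences supported on $l_H$. Now there are two cases. If $l_H\notin L^*$, then by definition of $k^*$ at most $k^*$ planes are incident to $l_H$, so $\ell_H$ carries at most $k^*$ lines of $L_\beta$, forcing $C=O(k^*/\sqrt n)$ and hence $|I^*|=O(k^*m)$ exactly as before. If instead $l_H\in L^*$, then \emph{every} crossing on $\ell_H$ is forbidden and contributes nothing to $I^*$, so those $\Omega(C\sqrt n)$ incidences are simply not counted and the rich-quadric configuration never arises in the restricted tally.

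To make the second case rigorous I would, at the outset, discard from the count all crossings lying on the finite set $\{L : l\in L^*\}$ of excluded ambient lines; since there are only $|L^*|=O(1)$ of them relative to the combinatorial parameters and each is skew to the generic surfaces one builds, this removal does not disturb the polynomial-degree bookkeeping (the estimates \eqref{transverse}, \eqref{d1} and the Salmon-theorem step are unaffected). The induction in $\min(m,n)$ proceeds verbatim, now against the modified target $|I^*|=O(m\sqrt n)$, and the only place the collinearity parameter enters is the final step above, where $k$ is replaced by $k^*$ precisely because the one dangerous line $\ell_H$ is either harmless (bounded by $k^*$) or excluded. The main obstacle I anticipate is confirming that the rich configuration extracted in the Guth–Katz argument always concentrates its excess incidences on a \emph{single} ambient line of $\Ge$ — i.e. that a doubly-ruled quadric factor cannot spread the $\Omega(C\sqrt n)$ forbidden-eligible crossings across several distinct lines $\ell_H$ in a way that evades both the $k^*$ bound and the exclusion; this is exactly the content already encoded in Lemma \ref{lem} together with Lemma \ref{rss}, and verifying it uses nothing beyond the same ruled-surface facts, so the extension costs only the few extra lines of argument flagged in the discussion preceding the statement.
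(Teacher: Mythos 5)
Your high-level strategy (pass to $L_\alpha,L_\beta\subset\Ge$ via Lemma \ref{tog}, rerun the proof of Theorem \ref{gkt2} with an exclusion, and finish with the dichotomy ``rich object forbidden / not forbidden'') is the same as the paper's, but your geometric translation of $L^*$ into the phase space is wrong, and the proof breaks exactly there. The Klein image of a physical line $l\in L^*$ is a \emph{point} $L\in\Ka$, not a line of $\Ge$; conversely, a line of $\Ge$ is the Klein image of a plane pencil (an incident point--plane pair), not of a physical line. The incidences $(q,\pi)$ supported on $l$ correspond, by the first part of Lemma \ref{lem}, to crossings in which \emph{both} the $\alpha$-line of $q$ and the $\beta$-line of $\pi$ lie inside $\Ge\cap h_l$, where $h_l=T_L\Ka\cap S$ is a \emph{three-hyperplane} of $S\cong\Pro^4$: these forbidden crossings fill a two-dimensional quadric surface, not a single ambient line. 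This is precisely why the paper converts $L^*$ into a finite set $\mathcal H^*$ of three-hyperplanes and, in the modified run of Theorem \ref{gkt2}, discounts incidences occurring in common factors of the partitioning polynomial $Q$ and $\prod_{h\in\mathcal H^*}Q_h$, where $Q_h$ is the quadratic polynomial cutting out $\Ge\cap h$.

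Concretely, your step ``discard all crossings lying on the finitely many excluded ambient lines; this does not disturb the bookkeeping'' fails. Take all $m$ points of $P$ on a single line $l\in L^*$ and all $n$ planes of $\Pi$ through $l$. Then every one of the $mn$ crossings is forbidden, and they are spread over the quadric $\Ge\cap h_l$; removing finitely many \emph{lines} of $\Ge$ deletes only a negligible portion of them, so your induction would be attempting to prove $mn=O\left(m\sqrt n+k^*m\right)$, which is false. The same confusion infects your final step: the rich configuration produced by the Guth--Katz argument is not ``$\Omega(C\sqrt n)$ crossings on a single ambient line $\ell_H$'' but $\Omega(C\sqrt n)$ lines of $L_\beta$ lying in $\Ge\cap H$ and crossing two skew $L_\alpha$-lines there, and it is the three-hyperplane $H$ -- not any line of $\Ge$ -- that corresponds via Lemma \ref{lem} to a physical line $l_H$ along which the planes are collinear. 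Your closing dichotomy ($l_H\in L^*$: incidences discounted; $l_H\notin L^*$: bounded by $k^*$) is the right idea and is what the paper does, but to make it work the exclusion must be carried through the polynomial argument at the level of quadric (degree-two) factors of $Q$, not at the level of individual lines of $\Ge$.
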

\begin{proof} We return to Section \ref{geom} to map the incidence problem between points and planes  to one between line families $L_\alpha,L_\beta$ in $\mathcal G\subset \Pro^4$.  By Lemmas \ref{lem}, \ref{tog} the set of  lines $L^*$ now displays itself as a set ${\mathcal H}^*$ of three-hyperplanes in $\Pro^4$. One comes to Theorem \ref{gkt2}, only now aiming to claim \eqref{pupss} as the estimate for the cardinality of the restricted incidence set  $I^*(L_\alpha,L_\beta)$, which discounts pair-wise line intersections within the intersections of $\Ge$ with each $h\in \mathcal H^*$,  $k^*$ replacing $k$.

The proof of Theorem \ref{gkt2} is modified as follows. Since the number of bad hyperplanes is finite, one can choose coordinates so that the intersection of each $h\in \mathcal H^*$ with $\Ge$ is defined by a quadratic polynomial $Q_h$ in $(x_1:\ldots:x_4)$. The arguments of Theorem \ref{gkt2} are copied modulo  that one assumes \eqref{contr} about the quantity $|I^*(P,\Pi)|$ and having reduced the problem to counting incidences only inside factors of a polynomial $Q$ of degree satisfying \eqref{d1}, does not take into account incidences in common factors of $Q$ and $\prod_{h\in \mathcal H} Q_h$. As a result,  the modified assumption \eqref{contr} forces one to have a rich irreducible degree $2$ factor of $Q$, which is not forbidden. This corresponds, within Theorem \ref{gkt2} to $\Omega(C\sqrt{n})$ lines from the family $L_\beta$ lying inside the intersection of $\Ge$ with some three-hyperplane $H\not\in \mathcal H^*$. In terms of Theorem \ref{mishh} this means collinearity of  $\Omega(C\sqrt{n})$ planes in $\Pi$ along some line not in $L^*$. This establishes the estimate \eqref{pupss}.\end{proof}

\renewcommand{\thetheorem}{\arabic{theorem}}
\addtocounter{theorem}{9}

Throughout the rest of the section, $\F$ is a field of odd characteristic $p$.

\subsection{On distinct values of bilinear forms}\label{vpr}
Established sum-product type inequalities over fields with positive characteristic have been weaker than over $\R$, where one can take advantage of the order structure and use geometric, rather than additive combinatorics. See, e.g.,  \cite{E0}, \cite{So},  \cite{KR}, \cite{BJ}, \cite{KS} for  some key methods and ``world records''.

The closely related geometric problem discussed in this section is one of  lower bounds on the cardinality of the set of values of a non-degenerate bilinear form $\omega$, evaluated on pairs of points from a set $S$ of $N$ non-collinear points in the plane. One may conjecture the bound $\Omega(N)$, possibly modulo factors, growing slower than any power of $N$. This may clearly hold in full generality in positive characteristic only if  $N=O(p)$.

The problem was claimed to have been solved over $\R$ up to the factor of $\log N$ in \cite{IRR}, $\omega$ being the cross or dot product. However, the proof was flawed. The error  came down to ignoring the presence of nontrivial weights or multiplicities, as they appear below. The best bound over $\R,\C$  that the erratum \cite{IRRE} sets is $\Omega(N^{9/13})$, for a skew-symmetric $\omega$. The bound $\Omega(N^{2/3})$ for any non-degenerate form $\omega$ follows just from applying the Szemer\'edi-Trotter theorem to bound the number of realisations of any particular nonzero value of $\omega$.

 \medskip
In this section we prove the following theorem.
\begin{theorem}\label{spr}
Let $\omega$ be a non-degenerate symmetric or skew-symmetric bilinear form and the set $S\subseteq \F^2$ of $N$ points not be supported on a single line.
Then
\begin{equation}|\omega(S) :=  \{\omega(s,s'):\,s,s'\in S\}| = \Omega\left[\min \left(N^{\frac{2}{3}},p\right)\right].\label{worst}\end{equation}
If $S$ has a subset $S'$ of $N'<p$ points, lying in distinct directions from the origin, then  $|\omega(S)|\gg N'.$ 
\end{theorem}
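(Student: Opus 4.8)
Theorem \ref{spr} — proof proposal.

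The plan is to reduce the problem of distinct values of $\omega$ to a point-plane incidence count governed by Theorem \ref{mish} (or its restricted form, Theorem \ref{mishh}). First I would fix the non-degenerate form $\omega$ and, for each value $\lambda \in \omega(S)$, consider the ``energy'' of representations $E = |\{(s_1,s_2,s_3,s_4) \in S^4 : \omega(s_1,s_2) = \omega(s_3,s_4)\}|$. By Cauchy--Schwarz applied to the level sets of $\omega$, one has $|\omega(S)| \geq \frac{|S|^4}{E} = \frac{N^4}{E}$ (after separating out the zero value, whose contribution must be controlled), so it suffices to prove the upper bound $E = O\!\left(N^4 / \min(N^{2/3},p)\right) = O\!\left(N^{10/3} + N^3/p \cdot (\cdots)\right)$; more precisely $E = O(N^4 \cdot \max(N^{-2/3}, p^{-1}))$. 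The main work is therefore the \emph{upper} bound on the additive/multiplicative energy $E$ of the form.

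The second and central step is to realize $E$ as an incidence count. The equation $\omega(s_1,s_2) = \omega(s_3,s_4)$, i.e. $\omega(s_1,s_2) - \omega(s_3,s_4) = 0$, is bilinear in the pairs, so fixing $(s_3,s_4)$ and viewing it against $(s_1,s_2)$ one gets a linear condition. The natural encoding is to map each pair $(s,s') \in S \times S$ to a point in $\F^3$ and a plane in $\F^3$ via the coordinates supplied by $\omega$ (for the skew form $\omega(x,y)=x_1y_2-x_2y_1$ this is essentially the wedge product, for the symmetric dot product it is analogous), in such a way that $\omega(s_1,s_2)=\omega(s_3,s_4)$ becomes precisely the incidence $q(s_1,s_2)\in \pi(s_3,s_4)$. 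Thus I would set $P = \{q(s_1,s_2): s_1,s_2\in S\}$ and $\Pi = \{\pi(s_3,s_4): s_3,s_4\in S\}$, with $m=n=O(N^2)$, so that $E = |I(P,\Pi)|$ up to lower-order diagonal terms. Theorem \ref{mish} then gives $E = O(m\sqrt{n} + km) = O(N^3 + kN^2)$, provided $N^2 = O(p^2)$, i.e. $N=O(p)$, which is exactly the regime where the bound is nontrivial. The collinearity parameter $k$ counts collinear planes, which corresponds to a degenerate family of pairs sharing a common linear relation; the non-collinearity of $S$ should force $k=O(N)$, yielding $E=O(N^3)$ and hence $|\omega(S)|=\Omega(N)$ in the range $N=O(p)$ — but this is too strong, so the correct calibration must instead produce $E = O(N^{10/3})$, meaning the incidence variables are not simply $N^2$ points and $N^2$ planes but a sparser configuration, e.g. $m=n=N$ points and planes with a weighted count, which is where the restricted Theorem \ref{mishh} and careful weighting enter.

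The hard part will be twofold. First, bounding the collinearity/degeneracy parameter $k$ (equivalently $k^*$ in Theorem \ref{mishh}): I expect that a line supporting many planes corresponds to many pairs $(s_3,s_4)$ with $\omega(\cdot,s_3 s_4)$ constrained to a pencil, and translating the hypothesis that $S$ is \emph{not} supported on a single line (or, for the final sentence, that $S$ has $N'$ points in distinct directions from the origin) into a genuine bound on $k$ is the crux; this is also precisely why the zero value of $\omega$ and isotropic/degenerate directions must be excised via the \emph{restricted} incidence set $I^*$, discounting incidences along a controlled set $L^*$ of ``bad'' lines. Second, the weights: distinct pairs may give the same point or plane, so I would track multiplicities and apply Cauchy--Schwarz or a dyadic popularity decomposition to pass from the weighted incidence bound to the clean energy bound, absorbing the diagonal and zero-value contributions. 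For the final sentence of the theorem, the extra hypothesis that $S'\subseteq S$ consists of $N'<p$ points in pairwise distinct directions from the origin directly caps the relevant degeneracy, forcing $k=O(1)$ (or the bad lines to be avoided), so the incidence bound collapses to the linear estimate $|\omega(S)|\gg N'$ with no loss; I would establish this as the clean special case once the machinery of the first statement is in place.
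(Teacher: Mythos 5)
Your skeleton---bound the energy $E$ of the equation $\omega(s,s')=\omega(t,t')$, convert it to a point--plane incidence problem in projective three-space, excise zero values with the restricted Theorem \ref{mishh}, and finish by Cauchy--Schwarz---is indeed the paper's strategy, but the central step of your proposal fails in two concrete ways. First, your encoding is wrong as stated: taking the point from the left pair $(s_1,s_2)$ and the plane from the right pair $(s_3,s_4)$ cannot turn the equation into an incidence, because $\omega(s_1,s_2)-\omega(s_3,s_4)$ is \emph{not} bilinear in that grouping (fixing $(s_3,s_4)$, the condition on $(s_1,s_2)\in\F^4$ is quadratic, not linear, contrary to your claim). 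The correct grouping pairs the \emph{first} arguments of the two sides against the \emph{second} arguments: in the paper's notation $\omega(s,s')=\omega(t,t')$, the point is $(s_1:s_2:t_1:t_2)\in\Pro^3$ and the plane has covector $(s_2':-s_1':-t_2':t_1')$. Second, precisely because this relation is homogeneous, the map from pairs to projective points is not injective: all pairs $(\lambda s,\lambda t)$ collapse to one point of $\Pro^3$. Hence $E$ is a \emph{weighted} incidence count, with weights as large as the maximal number of points of $S$ on a line through the origin, and your identity ``$E=|I(P,\Pi)|$ up to lower-order diagonal terms'' with $m=n=O(N^2)$ is false. This is not a technicality to be patched after the fact: it is exactly the multiplicities error that, as the paper recounts, invalidated the earlier proof in \cite{IRR}.

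Your proposed recalibration ($m=n=N$ with weights, forced by the feeling that $\Omega(N)$ is ``too strong'') is also not the right fix. The paper first uses non-collinearity of $S$ to reduce to the case where every line through the origin carries $O(N^{2/3})$ points, and then runs a dichotomy on directions. If $S$ has a subset $S'$ with one point in each of $\Omega(N^{2/3})$ distinct directions, the projectivization \emph{is} injective on $S'\times S'$, the unweighted Theorem \ref{mish} applies with $k=N'$, and one gets the linear bound $\Omega(N')$---far from being too strong, this is exactly the second claim of the theorem. If instead $S$ has only $O(N^{2/3})$ directions, one needs a weighted version of Theorem \ref{mish} (proved in the paper by a weight-rearrangement argument) in its restricted form, where the forbidden lines $L^*$ are the Cartesian products of lines through the origin supporting $S$: incidences along these correspond exactly to zero values of $\omega$, and their exclusion is what forces $k^*=O(N^{2/3})$, giving $E^{\neq 0}=O(N^{10/3})$. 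Finally, your constraint bookkeeping loses the theorem: requiring $N^2=O(p^2)$, i.e.\ $N=O(p)$, yields only $\Omega\left[\min\left(N^{2/3},p^{2/3}\right)\right]$. In both branches of the dichotomy the number of \emph{distinct} points and planes fed into Theorem \ref{mish} is $O(N^{4/3})$, so the hypothesis $n=O(p^2)$ translates into $N=O(p^{3/2})$---precisely the threshold where $N^{2/3}$ meets $p$, and what is needed for the stated bound $\Omega\left[\min\left(N^{2/3},p\right)\right]$.
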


\begin{proof}  
From now on we assume that $S$ does not have more than $N^{\frac{2}{3}}$ points on a single line through the origin, for since $S$ also contains a point outside this line, the estimate \eqref{worst} follows. This assumption will be seen not to affect the second claim of the theorem.  Suppose also, without loss of generality, that $S$ does not contain the origin, nor does it have points on the two coordinate axes.

We may assume that $\F$ is algebraically closed, in which case one may take a symmetric form $\omega$ as given by the $2\times 2$ identity matrix and a skew-symmetric one by the canonical symplectic matrix. We consider the latter situation only. The former case is similar. One can also replace $S$ with its union with $S^\perp=\{(-q_2,q_1):\,(q_1,q_2)\in S\}$ and repeat the forthcoming argument.

Consider the equation
\begin{equation}\label{eng} \omega(s,s')=\omega(t,t')\neq 0, \qquad (s,s',t,t')\in  S\times S\times S\times S. \end{equation}

Assuming that $\omega$ represents wedge products, this equation can be viewed as counting the number of incidences between the set of points $P\subset \Pro^3$ with homogeneous coordinates $(s_1:s_2:t_1:t_2)$ and planes in a set $\Pi$ defined by covectors $(s_2':-s_1':-t_2':t_1')$. However, both points and planes are weighted. Namely, the weight 
$w(p)$ of a point $p=(s:t)$ is the number of points $(s,t)\in \F^4$, which are projectively equivalent that is lie on the same line through the origin. The same applies to planes. The total weight of both sets of points and planes is $W=N^2$. Like in the case of the Szemer\'edi-Trotter theorem, the weighted variant of  estimate of Theorem \ref{mish} gets worse with maximum possible weight.

The number of solutions of \eqref{eng}, plus counting also quadruples yielding zero values of $\omega$ is the number of weighted incidences
\begin{equation}\label{weightin}
I_w := \sum_{q\in P, \pi\in \Pi} w(q)w(\pi) \delta_{q\pi},
\end{equation}
where $\delta_{q\pi}$ is $1$ when $q\in \pi$ and zero otherwise.

 \medskip
Consider two cases:  {\em (i) $S$ only has points in $O(N^{2/3})$ distinct directions through the origin; (ii) there exists $S'\subset S$ with exactly one point in $\Omega(N^{2/3})$ distinct directions.}

\medskip
To deal with (i) we need the following weighted version of Theorem \ref{mish}. 
\begin{theorem}\label{wmish} 
 Let $P, \Pi$ be  weighted sets of points and planes  in  $\Pro^3$, both with total weight $W$. Suppose, maximum weights are bounded by $w_0>1$. Let $k$ be the maximum number of collinear points, counted without weights. Suppose, $\frac{W}{w_0}=O(p^2)$, where $p>2$ is the characteristic of $\F$.
Then the number $I_w$ of weighted incidences is bounded as follows:
\begin{equation}\label{pupsweight}
I_w=O\left( W\sqrt{w_0W}+ k w_0 W\right).\end{equation}
The same estimate holds for the quantity $I^*_w$, which discounts weighted incidences along a certain set $L^*$ of lines in $\Pro^3$, the quantity $k^*$, denoting the maximum number of points  incident to a line not in $L^*$ replacing $k$ in estimate \eqref{pupsweight}.
\end{theorem}

\begin{proof} It is a simple weight rearrangement argument, the same as, e.g., in \cite{IKRT} apropos of the Szemer\'edi-Trotter theorem.  Pick a subset $P'\subseteq P$, containing $n=O\left(\frac{W}{w_0}\right)$ richest points in terms of non-weighted incidences. Assign to each one of the points in $P'$ the weight $w_0$, delete the rest of the points in $P$, so $P'$ now replaces $P$. The number of weighted incidences will thereby not decrease. Now of all planes pick a subset  $\Pi'$ of the same number $n$ of the richest ones, in terms of their non-weighted incidences with $P'$. Assign once again the weight $w_0$ to each plane in  $\Pi'$. We now replace $P,\Pi$ with $P',\Pi'$ -- the sets of respectively $n$ points and planes, for which we apply Theorem \ref{mish}, counting each incidence $w_0^2$ times. Note that we may still have $k$ collinear points in $P'$ or planes in $\Pi'$. This yields \eqref{pupsweight}.  

For the last claim of Theorem \ref{wmish} we use Theorem \ref{mishh} instead of Theorem \ref{mish}.\end{proof}

Returning to the proof of Theorem \ref{spr}, suppose we are in case  (i). We will  apply the $I_w^*$ estimate of Theorem \ref{wmish}  to the weighted arrangement of planes and points in $\Pro^3$, representing \eqref{eng}. Let us show that the quantity $k^*$ can be bounded as $O(N^{\frac{2}{3}})$, after it becomes clear what the set $L^*$ of forbidden lines is. The quantity $k$ is the maximum number of collinear points in the set  $S\times S\in \F^4$, viewed projectively. Suppose, $k\geq N^{\frac{2}{3}}$. This means we have a two-plane through the origin in $\F^4$, which contains points of $S\times S$ in at least $N^{\frac{2}{3}}$ directions in this plane. If this two-plane projects on the first two coordinates in $\F^4$ one-to-one, then $S$ itself has points in  $N^{\frac{2}{3}}$ directions. But in case (i) this is not the case.

We now define the finite set $L^*$ of forbidden lines in $\Pro^3$ as two-planes in $\F^4$, which are Cartesian products of pairs of lines through the origin in $\F^2$, each supporting a point of $S$. Hence $k^*$ is the maximum number of points incident to any other line in $\Pro^3$. 
If the two-plane through the origin in $\F^4$ projects on each coordinate two-plane $\F^2$, containing $S$ as a line through the origin, it is a Cartesian product of two lines $l_1$ and $l_2$ through the origin in $\F^2$. Such a plane may contain a point $(s_1,s_2,t_1,t_2)\in \F^4$ or be incident to a three-hyperplane through the origin in $\F^4$, defined by the covector  $(s_2', -s_1',  -t_2', t_1')=0$ only if the lines $l_1,l_2$ contain points of $S$.

Applying the $I_w^*$-version of estimate \eqref{pupsweight}, we therefore obtain
\begin{equation}\label{weste}
I^*_w=O\left(N^{\frac{10}{3}} + N^{\frac{10}{3}}\right).
\end{equation}

It remains to show that point-plane incidences along the lines in $L^*$ correspond to zero values of the form $\omega$ in \eqref{eng}. By definition, a line in $L^*$ is represented by a pair $(l_1,l_2)$ lines through  the origin in $\F^2$. If the $\F^4$-point $(s,t)=(s_1,s_2,t_1,t_2)$ lies in the two-plane, which is the Cartesian product $l_1\times l_2$, this means $s\in l_1$, $t\in l_2$. If a three-hyperplane through the origin in $\F^4$, defined by the covector  $(s_2', -s_1',  -t_2', t_1')=0$ contains both lines $l_1,l_2$, this means $s'\in l_1$, $t'\in l_2$. Hence $\omega(s,s')=\omega(t,t')=0.$

So, if case (i) takes place, the bound \eqref{worst}  follows from \eqref{eng} and \eqref{weste} by the Cauchy-Schwarz inequality. Observe that Theorem \ref{wmish} applies under the constraint $N\leq cp^{\frac{3}{2}}$ for some absolute $c$. In particular, when $N= \lfloor cp^{\frac{3}{2}}\rfloor$,  it yields $I_w=O(p^5)$, hence one has $\Omega(N^{\frac{2}{3}})= \Omega(p)$ distinct values of the form $\omega$. For  $N\geq cp^{\frac{3}{2}}$ we do no more than retain this estimate.

Finally, if case (ii) takes place, we apply Theorem \ref{mish} to the set $S'$. For now planes and points bear no weights other than $1$, and the above argument about collinear planes and points applies. Namely one can set $k=N'$ and zero values of $\omega$ may no longer be excluded. Then equation \eqref{eng} with variables in $S'$ alone has $O({N'}^3)$ solutions, and the last claim of Theorem \ref{spr} follows by the Cauchy-Schwarz inequality.
\end{proof}

It is easy to adapt the proof of Theorem \ref{spr} to the special case when $S=A\times B$ for then one can set $w_0 = \min(|A|,|B|)$.  This results in the following corollary. There is also a more economical way of deriving the following statement from Theorem \ref{mish}. See \cite{RRS}, Corollary 4.
\begin{corollary}\label{hbk} Let $A,B\subseteq \F$, with $|A|\geq|B|$.
 Then
\begin{equation} |AB\pm AB|=\Omega\left[\min\left(|A|\sqrt{|B|},p\right)\right].\label{mebd}\end{equation}
\end{corollary}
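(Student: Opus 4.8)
The plan is to deduce Corollary \ref{hbk} directly from Theorem \ref{spr} (and its weighted apparatus in Theorem \ref{wmish}) by specialising the point set to a Cartesian product $S = A\times B$ with the skew-symmetric form $\omega$ given by the wedge product. For points $s=(a,b)$ and $s'=(a',b')$ in $A\times B$, we have $\omega(s,s')=ab'-a'b$, so the value set $\omega(S)$ is essentially $AB-AB$ (symmetrically, the symmetric form yields $AB+AB$, which is why we can write $AB\pm AB$). Thus the left-hand side of \eqref{mebd} is, up to the sign choice, exactly $|\omega(S)|$, and the task reduces to re-running the proof of Theorem \ref{spr} but tracking the gain that comes from the product structure.

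The key structural point is that when $S=A\times B$ the multiplicities in the incidence count \eqref{weightin} are no longer bounded merely by the trivial weight. First I would observe that, as remarked immediately before the corollary, one may take $w_0=\min(|A|,|B|)=|B|$: a point of $\Pro^3$ with homogeneous coordinates $(s_1:s_2:t_1:t_2)=(a:b:a':b')$ picks up multiplicity from the projectively equivalent quadruples, and because the two factors are independent one-dimensional sets, the number of such coincidences is controlled by the smaller factor $|B|$ rather than by $N^{2/3}$. The total weight is still $W=N^2=|A|^2|B|^2$. Feeding $w_0=|B|$ and $W=|A|^2|B|^2$ into the weighted estimate \eqref{pupsweight}, and again routing zero values of $\omega$ through the forbidden-line set $L^*$ exactly as in case (i) of Theorem \ref{spr} so that the $k^*m$ term is subsumed, gives an upper bound on the number of nonzero solutions of \eqref{eng} of the form $O\!\left(W\sqrt{w_0W}\right)=O\!\left(|A|^3|B|^{5/2}\right)$.

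Finally I would close with Cauchy--Schwarz. The number of nonzero solutions of $\omega(s,s')=\omega(t,t')$ is at least $\frac{(\text{number of nonzero pairs})^2}{|\omega(S)\setminus\{0\}|}\gg \frac{(N^2)^2}{|AB\pm AB|}=\frac{|A|^4|B|^4}{|AB\pm AB|}$, provided a positive proportion of the $N^2$ pairs give nonzero values, which holds once $S$ avoids the coordinate axes. Comparing this lower bound with the incidence upper bound $O(|A|^3|B|^{5/2})$ yields $|AB\pm AB|\gg |A||B|^{3/2}=|A|\sqrt{|B|}\cdot|B|$, and since $|B|\ge 1$ this gives the claimed $\Omega\!\left(|A|\sqrt{|B|}\right)$; the truncation at $p$ comes, as in Theorem \ref{spr}, from the constraint $\frac{W}{w_0}=O(p^2)$ under which Theorem \ref{wmish} is valid, i.e. we only extract the bound up to the point where it reaches order $p$.

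The main obstacle I expect is the bookkeeping of the weight $w_0=\min(|A|,|B|)$: one must verify carefully that the product structure really caps all projective multiplicities at $|B|$ (and not some larger quantity arising from diagonal coincidences among the $t$-coordinates), and that the constraint $\frac{W}{w_0}=O(p^2)$ translates into a clean cutoff guaranteeing the min with $p$ in \eqref{mebd}. The geometric incidence input and the forbidden-line handling are inherited verbatim from Theorems \ref{wmish} and \ref{spr}; the genuinely new work is the multiplicity analysis that converts the generic exponent $N^{2/3}$ into the sharper $|A|\sqrt{|B|}$.
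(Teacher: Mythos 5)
Your route is the one the paper itself indicates (specialise the proof of Theorem \ref{spr} to $S=A\times B$, cap the weights at $w_0=\min(|A|,|B|)$, apply Theorem \ref{wmish} and Cauchy--Schwarz), but as written it has two genuine gaps. The first is arithmetic: with $W=|A|^2|B|^2$ and $w_0=|B|$,
$$
W\sqrt{w_0W}=|A|^2|B|^2\cdot|A||B|^{3/2}=|A|^3|B|^{7/2},
$$
not $|A|^3|B|^{5/2}$. Your exponent propagates to the intermediate conclusion $|AB\pm AB|\gg|A||B|^{3/2}$, which is simply false: for $A=B=\{1,\dots,n\}$ one has $|AB-AB|=O(n^2)$, whereas $|A||B|^{3/2}=n^{5/2}$. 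With the correct exponent, Cauchy--Schwarz gives $|A|^4|B|^4/\left(|A|^3|B|^{7/2}\right)=|A|\sqrt{|B|}$, i.e.\ exactly the bound of Corollary \ref{hbk} and nothing stronger; the fact that your computation appeared to beat the stated corollary was the signal to recheck.

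The second gap is conceptual: the term $k^*w_0W$ in \eqref{pupsweight} is not ``subsumed exactly as in case (i)''. In case (i) of Theorem \ref{spr} the bound $k^*=O(N^{2/3})$ is not automatic; it comes from the case hypothesis that $S$ has points in only $O(N^{2/3})$ distinct directions, since a non-forbidden two-plane through the origin of $\F^4$ projects injectively onto one $\F^2$ factor, so its points of $P$ inject into the directions of $S$. A product set $A\times B$ satisfies no such hypothesis: generically it determines $\Theta(|A||B|)$ distinct directions, and then the non-forbidden line of $\Pro^3$ corresponding to the diagonal two-plane $\{(x,x):x\in\F^2\}\subset\F^4$ carries $\Theta(|A||B|)$ unweighted collinear points of $P$, making $k^*w_0W$ as large as $|A|^3|B|^4$, which swamps the main term $|A|^3|B|^{7/2}$. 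To close the argument you need the same dichotomy that structures the proof of Theorem \ref{spr}: either $A\times B$ has $O(|A|\sqrt{|B|})$ distinct directions, in which case $k^*=O(|A|\sqrt{|B|})$, both terms of \eqref{pupsweight} are $O(|A|^3|B|^{7/2})$, and your computation closes; or it has more, in which case one takes one point per direction and runs the unweighted case (ii) argument (weights $1$, $k=N'$, $N'<p$), which directly yields $\Omega\left[\min\left(|A|\sqrt{|B|},p\right)\right]$ distinct values. Your reading of the $p$-cutoff, $W/w_0=|A|^2|B|=O(p^2)$, is correct for the weighted branch, but the unweighted branch needs its own truncation at $p$.
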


\subsection{Tightness of Theorem \ref{mish}} \label{example}
We use the considerations of the previous section, looking at the number of distinct dot products of pairs of vectors in the set 
$$
S=\{(a,b):\,a,b\in[1,\ldots, n]:\; \mbox{gcd}(a,b)=1\}.
$$
The set can be thought of lying in $\R^2$ or $\F_p^2$, for $p\gg n^2$. Clearly,  $S$ has $N=\Theta(n^2)$ elements.

But now there are no weights in excess of $1$, in the sense of the discussion in the preceding section. So we can apply the argument from case (ii) within the proof of Theorem \ref{spr}  and get a $O(N^3)$ bound for the number of solutions $E$ of the equation, with the standard dot product,
\begin{equation}
s\cdot s' = t\cdot t', \qquad (s,s',t,t')\in S\times S\times S\times S.
\label{integers}\end{equation}
Note that zero dot products can only contribute $O(N^2)$.

On the other hand,  the same, up to constants, bound for $E$  from below follows by the Cauchy-Schwarz inequality.  Indeed, $x=s\cdot s' $ in equation \eqref{integers} assumes integer values in $[1\ldots4n^2]$. If $n(x)$ is the number of realisations of $x$, one has
$$E=\sum_x n^2(x) \geq \frac{1}{4n^2} \left(\sum_x n(x)\right)^2 \gg n^6\gg N^3.$$

\subsection{On distinct distances in $\F^3$}
\newcommand{\es}{\boldsymbol s}
\newcommand{\te}{\boldsymbol t}
Once again in this section $\F$ is an algebraically closed field of positive characteristic $p>2$.

The Erd\H os distance conjecture is open in $\R^3$, where it claims that a set $S$ of $N$ points determines $\Omega(N^{\frac{2}{3}})$ distinct distances\footnote{The conjecture is often formulated more cautiously, that there are $\Omega^*(N^{\frac{2}{3}})$ distinct distances, the symbol $\Omega^*$ swallowing terms, growing slower than any power of $N$.}.  The best known bound in $\R^3$ is $\Omega(N^{.5643})$, due to Solymosi and Vu \cite{SV}.

We prove the bound $\Omega(\sqrt{N})$ for the positive characteristic pinned version of the problem, i.e., for the number of distinct distances, attained from some point $\es\in S$, for $N=O(p^2)$, assuming that $S$ is not contained in a single semi-isotropic plane, as described below. 

Define the distance set
$$\Delta(S) = \{\|\es-\te\|^2:\,\es,\te \in S\},$$
with the notation $\es=(s_1,s_2,s_3)$, $\|\es\|^2 = s_1^2+s_2^2+s_3^2.$ Let us call a pair $(\es,\te)$ a {\em null-pair} if $\|\es-\te\|=0$.

In positive characteristic, the space $\F^3$ (even if $\F=\F_p$) always has a cone of {\em isotropic directions} from the origin,  that is $\{\om\in \F^3:\,\om\cdot \om=0\}$, with respect to the standard dot product.  See \cite{HI}, in particular Theorem 2.7 therein for explicit calculations of isotropic vectors and their orthogonal complements over $\F_p$.

The equation for the isotropic cone through the origin in $\F^3$ is clearly
\begin{equation}\label{cone}
x^2+y^2+z^2=0.
\end{equation}
It is a degree two ruled surface, whose ruling is not a regulus, see Section \ref{rgl}. 

If $\ee_1$ is an isotropic vector through the origin, its orthogonal complement $ \ee_1^\perp$ is a plane, containing $\ee_1$. Let $\ee_2$ be another basis vector in this plane, orthogonal to $\ee_2$. Then $\ee_2$ is not isotropic, for otherwise the whole plane $\ee_1^\perp$ would be isotropic. This is impossible, for equation \eqref{cone} is irreducible. We call the plane $\ee_1^\perp$ or its translate {\em semi-isotropic.}

The fact that $\ee_2$ is not isotropic implies that there are no {\em nontrivial null triangles} that is triangles with three zero  length sides, unless the three vertices lie on an isotropic line. With this terminology, there exist only {\em trivial} null triangles in $\F^3$.

In a semi-isotropic plane one can have $N=kl$ points, with $1\leq k\leq l$, with just $O(k)$ distinct pairwise distances: place $l$ points on each of $k$ parallel lines in the direction of $\ee_1$, whose $\ee_2$-intersects are in arithmetic progression.

To deal with zero distances we use the following lemma.
\begin{lemma} \label{easy} 
Let $T$ be a set of $K$ points on the level set 
$$Z_R=\{(x,y,z):\,x^2+y^2+z^2=R\}.$$ For $K\gg1$ sufficiently large, either $\Omega(K)$ points in $T$ are collinear, or a possible proportion of $(\te,\te')\in T\times T$ are not null pairs. 
\end{lemma}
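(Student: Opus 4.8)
The plan is to linearise the null-pair condition on the level set and then recast the count of null pairs as a point-plane incidence problem governed by Theorem \ref{mish}. First I would note that for $\te,\te'\in Z_R$ one has $\|\te\|^2=\|\te'\|^2=R$, so, using $\charac\F\neq 2$,
$$\|\te-\te'\|^2 = \|\te\|^2 - 2\,\te\cdot\te' + \|\te'\|^2 = 2R - 2\,\te\cdot\te'.$$
Hence $(\te,\te')$ is a null pair if and only if $\te\cdot\te' = R$. Introducing for each $\te\in T$ the affine plane $\pi_\te=\{\x\in\F^3:\te\cdot\x = R\}$ and the family $\Pi=\{\pi_\te:\te\in T\}$, the ordered null pairs are exactly the incidences $I(T,\Pi)$: the pair $(\te,\te')$ records $\te'\in\pi_\te$, and the $K$ diagonal pairs are included since every $\te\in Z_R$ lies on $\pi_\te$.

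Next I would apply Theorem \ref{mish} with $m=n=K$, which is admissible because $K\le N=O(p^2)$ and $p\neq 2$. This yields $|I(T,\Pi)| = O\!\left(K^{\frac32}+kK\right)$, where $k$ is the maximal number of planes of $\Pi$ through a common line. The crucial point is to identify $k$ with a collinearity parameter of $T$ itself. Homogenising, $\pi_\te$ has dual covector $(-R:\te)\in\Pro^{3*}$, and three distinct planes share a line precisely when the three covectors are linearly dependent. For $R\neq0$, a relation $a_1(-R:\te_1)+a_2(-R:\te_2)+a_3(-R:\te_3)=0$ forces $\sum_i a_i=0$ from the first coordinate, so it is exactly an affine dependence of $\te_1,\te_2,\te_3$, i.e.\ their collinearity in $\F^3$. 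Thus $k$ equals the maximal number of collinear points of $T$.

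I would then run the dichotomy on this $k$. Since $x^2+y^2+z^2-Rx_0^2$ is nondegenerate for $R\neq0$, the surface $Z_R$ is a smooth quadric in $\Pro^3$, hence doubly ruled; a line therefore meets $Z_R$ in at most two points unless it is one of its generators, so any collinear subset of $T$ has size at most $2$ unless it lies on a generator. Consequently, if $k\ge cK$ for a small absolute constant $c$, then $\Omega(K)$ points of $T$ are collinear and the first alternative of the lemma holds. Otherwise $k<cK$, and Theorem \ref{mish} gives $|I(T,\Pi)|\le C_0\!\left(K^{\frac32}+cK^2\right)$ for an absolute $C_0$; taking $c=\tfrac{1}{4C_0}$ and $K$ large enough that $C_0K^{\frac32}\le \tfrac14 K^2$ bounds the number of null pairs by $\tfrac12 K^2$, leaving at least $\tfrac12 K^2$ non-null pairs, a positive proportion of $T\times T$.

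The only place genuine care is required, and the main obstacle, is the degenerate level set $R=0$, the isotropic cone of Section \ref{rgl}. There the covectors are $(0:\te)$, so their dependence signals that $\te_1,\te_2,\te_3$ are merely coplanar through the origin rather than collinear, and distinct points of $T$ proportional through the origin even define the same plane. However $Z_0$ is ruled by lines through its vertex, and any plane through the origin meets it in at most two generators; hence $\Omega(K)$ coplanar-through-origin points of $T$ (or $\Omega(K)$ coincidences among the $\pi_\te$) again force $\Omega(K)$ points onto a single generator, so the same dichotomy applies verbatim.
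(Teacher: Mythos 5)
Your reduction of null pairs on $Z_R$ to point--plane incidences (a pair $(t,t')$ is null iff $t\cdot t'=R$; planes with covectors $(-R:t)$; for $R\neq 0$, collinear planes correspond exactly to collinear points) is correct, and under the extra hypothesis $K=O(p^2)$ the dichotomy you run does yield the conclusion. The genuine gap is that this extra hypothesis is not part of Lemma \ref{easy}. Your justification ``$K\le N=O(p^2)$'' imports $N$ from the proof of Theorem \ref{erd}, where the lemma happens to be applied; the lemma itself concerns an arbitrary finite set $T$ on a level set over the algebraically closed field $\F$, and since $\F$ is infinite, $Z_R$ supports sets $T$ with $K$ arbitrarily large compared with $p^2$ --- exactly the regime in which Theorem \ref{mish}, and hence your incidence step, is unavailable. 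The paper's own proof is unconditional and uses no incidence machinery at all: it first records that there are no nontrivial null triangles in $\F^3$ (three pairwise-null points must lie on a single isotropic line, because two orthogonal isotropic vectors of a nondegenerate ternary form are proportional), then forms the graph of null pairs on $T$, observes that every triangle of this graph lies inside a clique coming from an isotropic line, and, assuming no clique has size $\geq .01K$, deletes edges to make these (edge-disjoint) cliques bipartite while retaining more than $K^2/4$ edges; Tur\'an's theorem then produces a triangle lying in no clique, i.e., a nontrivial null triangle --- a contradiction. So to salvage your route you must either add $K=O(p^2)$ to the statement (which would still suffice for Theorem \ref{erd}, but is weaker than the lemma as stated) or replace the incidence step by a characteristic-free argument of this kind.

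A secondary issue is the $R=0$ case, where your claim that the argument ``applies verbatim'' is not accurate: proportional points of $T$ determine the identical plane $\pi_t=t^{\perp}$, so the incidence count becomes weighted, and Theorem \ref{mish} does not apply directly (this is the multiplicity problem that Theorem \ref{wmish} is designed to handle, and the crude bound ``maximum weight times unweighted incidences'' is too lossy when a generator carries close to $cK$ points). Fortunately, the geometry you cite gives a shortcut that avoids incidences entirely: for isotropic $t$ the plane $t^{\perp}$ meets the cone $Z_0$ only in generators (in fact only in the generator through $t$), so null pairs on $Z_0$ occur only between points on a common generator; hence if no generator carries $\Omega(K)$ points of $T$, every point lies in $O(cK)$ null pairs and the second alternative follows at once. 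You should make this explicit rather than assert that the $R\neq 0$ argument repeats.
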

\begin{proof} First note, as an observation, that even if $R\neq 0$, when $Z_R$ is a doubly-ruled quadric it may well be ruled by isotropic lines.  Indeed, representing lines in $\F^3$ by Pl\"ucker vectors $(\om:\ve)$ in the Klein quadric $\Ka$, defined by the relation \eqref{Klein}, i.e., $\om\cdot\ve =0$, isotropic vectors are cut out by the quadric 
$\om\cdot\om=0$, while a regulus is a conic curve cut out from $\Ka$ by a two-plane. If the intersection of the three varieties in question is non-degenerate, it is at most four points, that is there are at most four isotropic lines per regulus.

However, take the two-plane as $\ve =\lambda \om$, for some $\lambda\neq 0$. (The case $\lambda=0$ corresponds to the isotropic cone through the origin.) Write $\ve=ad(\qu)\,\om$, for some point $\qu\in \F^3$ lying on the line in question, where $ad(\qu)$ is a skew-symmetric matrix, see Footnote \ref{ads}. This yields the eigenvalue equation $\det(ad(\qu) - \lambda I) =0,$ which means that $\qu$ satisfies
$$
\lambda^2+ \|\qu^2\|=0,
$$
that is  $\qu\in Z_{-\lambda^2},$  the point set of a regulus of isotropic lines.

\medskip
Turning to the actual proof of the lemma, consider a simple undirected graph $G$ with the vertex set $T$, where there is an edge connecting distinct vertices $\te$ and $\te'$ if $(\te,\te')$ is a null pair. Suppose $G$ is close to the complete graph, that is $G$ has at least  $.99 K(K-1)/2$ edges. 

Note that $K'\leq K$ points of $T$, lying on an isotropic line,  yield a clique of size $K'$ in $K$. Suppose there is no clique of size, say $K'\geq.01K,$ or we are done. 

Then in each clique of size $K'$ one can delete at most $(K'+1)^2/4$ edges,  turning it into a bipartite graph, whereupon there are no triangles left within that clique. After that one is left with no triangles in $G$, corresponding to trivial null triangles in $Z_R$.

However if   $K'\leq .01K$, the number of remaining edges is still greater than $K^2/4$, clearly the former cliques had no edges in common. By Turan's theorem there is a triangle in what is left of $G$, and it corresponds to a nontrivial null triangle in  $Z_R$.

This contradiction finishes the proof of Lemma \ref{easy}
 \end{proof}

We are now ready to prove the last theorem in this paper.
\begin{theorem}\label{erd} A set $S$ of $N$ points in $\F^3$, such that all points in $S$ do not lie in a single semi-isotropic plane, determines $\Omega[\min(\sqrt{N},p)]$ distinct pinned distances, i.e., distances from some fixed $\es\in S$ to other points of $S$.
\end{theorem}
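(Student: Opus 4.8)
The plan is to convert the pinned distance problem into a weighted point-plane incidence count in $\Pro^3$ and apply the restricted weighted bound of Theorem \ref{wmish} (equivalently Theorem \ref{mishh}), exactly in the spirit of how Theorem \ref{spr} was handled. Fix a candidate pin $\es\in S$ and consider, for pairs $(\te,\te')\in S\times S$, the equation $\|\es-\te\|^2=\|\es-\te'\|^2$, i.e.\ equal (squared) distances from the pin. Expanding, $\|\te\|^2-2\,\es\cdot\te=\|\te'\|^2-2\,\es\cdot\te'$, which is an \emph{affine-linear} condition relating $\te$ and $\te'$. The number of solutions of the congruence equation $E_\es=\#\{(\te,\te')\in S\times S:\ \|\es-\te\|=\|\es-\te'\|\}$ controls the number of distinct pinned distances $|\Delta_\es(S)|$ from below by Cauchy--Schwarz: $|\Delta_\es(S)|\ge \frac{|S|^2}{E_\es}$ (after subtracting the null-pair contribution). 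So the goal is to produce a pin $\es$ for which $E_\es$ is small, namely $E_\es=O(N^{3/2})$, which then yields $|\Delta_\es(S)|=\Omega(\sqrt N)$, capped at $\Omega(p)$ by the constraint $N=O(p^2)$.

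Next I would set up the incidence picture. Summing $E_\es$ over all pins, or fixing one pin via an averaging/pigeonhole argument, the equal-distance relation becomes an incidence between points $\te\in\F^3$ (lifted homogeneously into $\Pro^3$, carrying the auxiliary coordinate $\|\te\|^2$) and the planes cut out by the linear relation above, one plane per $\te'$. Concretely one encodes each $\te=(t_1,t_2,t_3)$ as a point with homogeneous coordinates incorporating $\|\te\|^2$, and each $\te'$ as a plane; a point-plane incidence is precisely an equal-distance pair. As in Theorem \ref{spr}, degenerate configurations must be discounted: the null pairs $\|\es-\te\|=0$ are handled by Lemma \ref{easy}, which guarantees that unless $\Omega(N)$ points of $S$ lie on a single isotropic line, a positive proportion of pairs are non-null, so the null contribution does not swamp the count. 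The collinearity parameter $k$ is the maximum number of points of $S$ on a line, and the forbidden line set $L^*$ (for the $I^*$ version) should absorb the isotropic/semi-isotropic degeneracies that would otherwise allow many collinear planes; the hypothesis that $S$ is \emph{not} contained in a single semi-isotropic plane is precisely what prevents the extremal configuration (described just before the theorem) of $k\cdot l$ points with only $O(k)$ distances.

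The main obstacle, and where the argument must be done carefully, is the degeneracy bookkeeping: controlling $k$ (or $k^*$) so that the $km$ term in \eqref{pups} does not dominate, i.e.\ ensuring $k=O(\sqrt N)$ after discounting the bad lines. If too many points of $S$ are collinear one cannot get a good pin directly, so I would split into cases — either many points lie on a single line (treat that line's contribution separately, or use that a line meets a sphere $Z_R$ in $O(1)$ points away from the isotropic directions) or no line is too rich. The semi-isotropic hypothesis enters exactly here: it rules out the planar configuration realizing few distances and forces the relevant planes in the incidence problem to have at most $O(\sqrt N)$ collinear members along any non-forbidden line, so the restricted bound of Theorem \ref{mishh} applies with $k^*=O(\sqrt N)$. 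Granting this, \eqref{pupss} gives $E_\es=O(N^{3/2})$ for a suitable pin selected by averaging, and Cauchy--Schwarz closes the argument, with the $\min(\sqrt N,p)$ form coming from the $N=O(p^2)$ ceiling, completing the proof.
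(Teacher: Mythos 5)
Your outer skeleton (the energy $E$ of \eqref{energy}, Cauchy--Schwarz, averaging to select a good pin, the case split on a rich line) matches the paper, but your core reduction to incidences assigns the roles of points and planes the wrong way round, and this is a genuine gap rather than a presentational difference. In the equation $\|\boldsymbol s-\boldsymbol t\|^2=\|\boldsymbol s-\boldsymbol t'\|^2$ the terms $\|\boldsymbol s\|^2$ cancel, so the relation is \emph{linear in the pin} $\boldsymbol s$ and quadratic in $\boldsymbol t,\boldsymbol t'$. Accordingly, the paper takes the pins $\boldsymbol s\in S$ as the $N$ unweighted points of an incidence problem in $\Pro^3$, and for each non-null pair $(\boldsymbol t,\boldsymbol t')$ the perpendicular bisector plane of $[\boldsymbol t\,\boldsymbol t']$ as a weighted plane. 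You instead take the $\boldsymbol t$'s as points, which forces the lift carrying $\|\boldsymbol t\|^2$; but the lifted points then live in $\F^4\subset\Pro^4$ and your linear conditions cut out three-dimensional hyperplanes of $\Pro^4$, so Theorems \ref{mish}, \ref{mishh}, \ref{wmish} --- statements about planes in $\Pro^3$ --- do not apply. If instead you first fix the pin, the figures indexed by $\boldsymbol t'$ are concentric spheres centred at $\boldsymbol s$ (parallel hyperplanes after the lift), for which no incidence theorem can beat the trivial count; indeed the per-pin bound $E_{\boldsymbol s}=O(N^{3/2})$ (with $E_{\boldsymbol s}$ the number of solutions of \eqref{energy} for fixed $\boldsymbol s$) is simply false in general: place $S$ in general position on a single sphere $Z_R(\boldsymbol s)$, $R\neq 0$, so that $E_{\boldsymbol s}=\Theta(N^2)$ while no line is rich. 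A nontrivial bound is available only for the global count $\sum_{\boldsymbol s}E_{\boldsymbol s}$, and only in the dual bisector-plane formulation; the pigeonholing of the pin must come after that bound.

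Second, you are missing the multiplicity control for the bisector planes, which is where Lemma \ref{easy} actually enters --- not to handle zero pinned distances, which are excluded by fiat in \eqref{energy}. A single plane can bisect up to $N$ segments $[\boldsymbol t\,\boldsymbol t']$; the paper caps its weight at $N$ by noting that, given the plane and $\boldsymbol t$, the point $\boldsymbol t'$ is the reflection of $\boldsymbol t$ in the plane, hence unique --- and this reflection argument is valid \emph{only} when $\boldsymbol t-\boldsymbol t'$ is non-null (for a null pair the ``bisector'' plane even contains $\boldsymbol t$ and $\boldsymbol t'$, and its weight can blow up). That is the purpose of the reduction from $E$ to $E^*$ in \eqref{energystar} via Lemma \ref{easy}: on each rich sphere $Z_R(\boldsymbol s)$, either $\Omega(\sqrt N)$ points are collinear (a case treated separately at the outset) or a positive proportion of pairs is non-null. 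Your proposed forbidden-line set $L^*$ cannot do this job, since the offending null pairs are not indexed by a small set of lines in $\Pro^3$. Finally, even granting the correct setup, your quantitative tool is too weak: the symmetric weighted bound \eqref{pupsweight} with $W=N^2$, $w_0=N$ gives only $O(N^{7/2})$, which is vacuous after Cauchy--Schwarz. The paper instead exploits the asymmetry of the situation --- planes weighted, points unweighted --- together with the linearity of \eqref{pups} in $m$, substituting $m\to N^2$, $n\to N$, $k\to\sqrt N$ to get $E^*=O(N^{5/2})$, whence some pin satisfies $E_{\boldsymbol s}=O(N^{3/2})$ and the theorem follows.
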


\begin{proof}
First off, let us restrict $S$, if necessary, to a subset of at most $cp^2$ points, where $c$ is some small absolute constant, later to enable us to use Theorem \ref{mish}. We keep using the notation $S$ and $N$. Furtermore, we assume that $S$ has at most $\sqrt{N}$ collinear points or there is nothing to prove: even if $\sqrt{N}$ collinear points lie on an isotropic line, $S$ has another point $\es$  outside this line, such that the plane containing $\es$ and the line is not semi-isotropic. It is easy to see that then there are $\Omega(\sqrt{N})$ distinct distances from $\es$ to the points on the  line.

Let $E$ be the number of solutions of the equation
\begin{equation}\label{energy} 
\|\es-\te\|^2 = \|\es-\te'\|^2\neq 0,\qquad (\es,\te,\te')\in S\times S\times S.\end{equation}
Let us show that either $S$ contains a line with $\Omega(\sqrt{N})$ points or
\begin{equation}\label{clm}
E=O(N^{\frac{5}{2}}).
\end{equation}

We claim, by the pigeonhole principle and  Lemma \ref{easy}, that assuming $E\gg N^{5/2}$ implies that either there is a line with $\Omega(\sqrt{N})$ points, or $E=O(E^*)$, where $E^*$ is the number of solutions of the equation
\begin{equation}\label{energystar} 
\|\es-\te\|^2 = \|\es-\te'\|^2\neq 0,\qquad (\es,\te,\te')\in S\times S\times S:\;\|\te-\te'\|\neq 0.
\end{equation}

Indeed, the quantity $E$ counts the number of equidistant pairs of points from each $\es\in S$ and sums over $\es$. Therefore, a positive proportion of $E$ is contributed by points $\es$ and level sets $Z_R(\es)=\{\te\in \F^3:\, \|\es-\te\|=R\}$, such that $Z_R(\es)$ supports $\Omega(\sqrt{N})$ points of $S$. 
By Lemma \ref{easy} either there is a line with $\Omega(\sqrt{N})$ points, or  a positive proportion of pairs of distinct  $\te,\te'\in Z_R(\es)$ is non-null.

This establishes the claim in question.

\medskip
Now observe that to evaluate the quantity $E^*$, for each pair $(\te,\te')$ we have a plane through the midpoint of the segment $[\te\,\te']$, normal to the vector $\te-\te'$ and need to count points $\es$ incident to this plane. The plane in question does not contain $\te$ or $\te'$.

We arrive at an incidence problem $(S,\Pi)$ between $N$ points and a family of planes, but the planes have weights in the range $[1,\ldots,N]$, for the same plane can bisect up to $N/2$ segments $[\te \,\te']$, {\em provided that} $(\te,\te')$ is not a null pair. That is given the plane, there is at most one $\te'$ for each $\te$, so that the plane may bisect  $[\te \,\te']$.  

Thus number $m$ of distinct planes is $\Omega(N)$ and at most $N^2$, the maximum weight per plane is $N$, the total weight of the planes $W=N^2$.

It is immediate to adapt the formula (\ref{pups}) to the case of planes with weights. Note that the number of distinct planes is not less than the number of points, so in the formula \eqref{pups}, the notation $m$ will now pertain to planes, $n$ to points, and $k$ to the maximum number of collinear points. Since the estimate (\ref{pups}) is linear in $m$, the case of weighted planes and non-weighted points arises by replacing $m$ with $N^2$, $n$ with $N$, and $k$ with $\sqrt{N}$, for otherwise, once again, there is nothing to prove.

Theorem \ref{mish} now applies for $N=O(p^2)$ and yields the estimate \eqref{clm}. Theorem \ref{erd} follows from \eqref{energy} by the Cauchy-Schwarz inequality.  In particular, when $N=cp^2$ for some absolute $c$, we get $\Omega(p)$ distinct pinned distances. If  $N\geq cp^2$ we simply retain this estimate.

\end{proof}

\end{document}